\theoremstyle{plain}
\newtheorem{thm}{Theorem}
\newtheorem{lem}[thm]{Lemma}
\theoremstyle{definition}
\newtheorem{rem}[thm]{Remark}
\newtheorem{defi}[thm]{Definition}
\numberwithin{thm}{section}
\numberwithin{equation}{section}
\def\supp{\operatorname{supp}}
\def\esup{\operatornamewithlimits{ess\,sup}}
\title{New characterization of weighted inequalities involving
superposition of Hardy integral operators}
\author{%
Amiran Gogatishvili$^{1}$ \ and \ Tu\u{g}\c{c}e \"{U}nver$^{2*}$
}
\begin{document}

\date{}

\maketitle

\vspace{-0.5cm}

\begin{center}
{\footnotesize 
$^1$ Institute of Mathematics of the
 Czech Academy of Sciences,
 \v Zitn\'a~25,
 115~67 Praha~1,
 Czech Republic \\
$^2$Department of Mathematics,
Kirikkale University,
71450 Yahsihan, Kirikkale,
Turkey \\
*Corresponding author. E-mail: tugceunver@kku.edu.tr, \texttt{ORCiD:}{0000-0003-0414-8400}\\
Contributing author: gogatish@math.cas.cz, \texttt{ORCiD:}{0000-0003-3459-0355}
}
\end{center}

\bigskip
\noindent

\bigskip
\noindent
{\small{\bf ABSTRACT.}
Let $1\leq p <\infty$ and $0 < q,r < \infty$. We characterize the validity of the inequality for the composition of the Hardy operator,
\begin{equation*}
\bigg(\int_a^b \bigg(\int_a^x \bigg(\int_a^t f(s)ds \bigg)^q u(t) dt \bigg)^{\frac{r}{q}} w(x) dx  \bigg)^{\frac{1}{r}} \leq C \bigg(\int_a^b f(x)^p v(x) dx \bigg)^{\frac{1}{p}} 
\end{equation*} 
for all non-negative measurable functions $f$ on $(a,b)$, $-\infty \leq a < b \leq \infty$. We construct a more straightforward discretization method than those previously presented in the literature, and we provide some new scales of weight characterizations of this inequality in both discrete and continuous forms and we obtain previous characterizations as the special case of the parameter.}

\medskip
\noindent

{\small{\bf Keywords}{: weighted Hardy inequality, iterated operators, Copson operator, Hardy operator, inequalities for monotone functions}
}

\medskip
{\small{\bf 2010 Mathematics Subject Classification}{: 26D10, 26D15}}

\baselineskip=\normalbaselineskip

\section{Introduction and the main results}

Let $-\infty\leq a<b \leq \infty$. Denote by $\mathfrak{M}^+(a,b)$ the set of all non-negative measurable functions on $(a,b)$ and $\mathfrak{M}^{\uparrow}(a,b)$ is the class of non-decreasing elements of $\mathfrak{M}^+(a,b)$. 

In operator theory, weighted inequalities involving operator composition may be found in various topics. Let $0< q,r <\infty$ and $1 \leq p <\infty$. The validity of inequalities
\begin{equation} \label{HC-ineq.}
\bigg(\int_0^{\infty} \bigg(\int_0^x \bigg(\int_t^{\infty} h(s) ds\bigg)^q u(t) dt \bigg)^{\frac{r}{q}} w(x) dx  \bigg)^{\frac{1}{r}} \leq C \bigg(\int_0^{\infty} h(x)^p v(x) dx \bigg)^{\frac{1}{p}}, 
\end{equation} 
and
\begin{equation} \label{HH-ineq.}
\bigg(\int_0^{\infty} \bigg(\int_0^x \bigg(\int_0^t h(s) ds\bigg)^q u(t) dt \bigg)^{\frac{r}{q}} w(x) dx  \bigg)^{\frac{1}{r}} \leq C \bigg(\int_0^{\infty} h(x)^p v(x) dx \bigg)^{\frac{1}{p}},
\end{equation} 
for all $h \in \mathfrak{M}^+(0,\infty)$ are crucial because many classical inequalities can be reduced to them. For example, duality techniques reduce the embeddings between Lorentz-type spaces, Morrey-type spaces, and Ces\'{a}ro-type spaces to the weighted iterated inequalities (see, e.g., \cite{BO, GKPS, GMU-CopCes, GMU-cLMLM, Unver-Ces}). On the other hand, characterizations of weighted bilinear Hardy and Copson inequalities reduce to the characterizations of iterated Hardy inequalities (see, e.g., \cite{AOR, Krepela-EMS, ST}). 

Various approaches have been used to handle inequalities \eqref{HC-ineq.} and \eqref{HH-ineq.} resulting in conditions of a different nature. Inequality \eqref{HC-ineq.} is investigated thoroughly. The most recent results are presented in the paper \cite{KO}. Alternative characterizations of inequalities \eqref{HC-ineq.} and \eqref{HH-ineq.} are found in \cite{K}. Detailed information on the development and history of this inequality may be found in the recent paper \cite{GMPTU}.

Note that, when $q=1$ using Fubini's Theorem, inequality \eqref{HH-ineq.} reduces to the weighted Hardy-type inequality involving kernel, that is,
\begin{equation} \label{kernel-ineq.}
\bigg(\int_0^{\infty} \bigg(\int_0^x \bigg(\int_s^x u(t) dt \bigg) h(s) ds \bigg)^{r} w(x) dx  \bigg)^{\frac{1}{r}} \leq C \bigg(\int_0^{\infty} h(x)^p v(x) dx \bigg)^{\frac{1}{p}}, \quad h \in \mathfrak{M}^{+}(0,\infty).
\end{equation} 
Inequality \eqref{kernel-ineq.} was completely characterized in \cite{MR-Saw89,BloKer91,Oinarov94,Step94} when $1\leq r, p < \infty$.  However, for a long period, there was no adequate characterization in the case when $0 < r < 1 \leq p < \infty$. Several attempts have been made to tackle this case (see, e.g., \cite{Step94,Lai99,Prok2013,GogStep2013}); in some works, necessary and sufficient conditions did not match, while in others, characterization had a discrete form or involved auxiliary functions. Hence, it was not easily verifiable. Finally, in \cite{Krepela-ker}, the missing integral conditions were provided. 

In the general cases, \eqref{HH-ineq.} is characterized in \cite{GogMus-MIA}, but the conditions are in a non-standard form. It was also considered in \cite{ProkStep}, but the conditions are not applicable because they involve auxiliary functions. Recently, in \cite{KrePick-CC}, a more complicated discretization method is used to establish a characterization of the same inequality that involves iteration of the Copson operators and is restricted to non-degenerate weights, and the case $p=1$ is presented without a proof. In our approach, the case $p >1$ is not separated from $p= 1$.

Recently, in \cite{GMPTU}, with a new and simpler discretization technique that requires neither parameter restrictions nor non-degeneracy conditions, characterization of \eqref{HC-ineq.} is given. We adapt this approach to the specific demands of the inequality considered in this paper. Our technique allows us to obtain a scale of characterization which was not possible before; moreover, we obtain the previous characterizations of inequality \eqref{HH-ineq.} as a special case (more, precisely for $\beta =0$, see Theorem~\ref{T:main}). 

We would like to point out that the characterization of  Hardy inequality involving non-decreasing functions, that is,
\begin{equation}\label{monot.ineq}
\bigg(\int_0^{\infty} \bigg(\int_0^x f(s) u(s) ds \bigg)^q  w(x) dx  \bigg)^{\frac{1}{q}} \leq C \bigg(\int_0^{\infty} f(x)^p v(x) dx \bigg)^{\frac{1}{p}},  \quad f \in \mathfrak{M}^{\uparrow}(0,\infty),
\end{equation}
is obtained directly from inequality  \eqref{HH-ineq.} without any further work (see the proof of Theorem~\ref{Cor}) as a direct outcome of our main theorem (see Theorem~\ref{T:main}).

We should also mention that in \cite{GogStep-JMAA,GogStep2013}, using reduction techniques, \eqref{monot.ineq}
is reduced to inequality \eqref{kernel-ineq.}.  However, as we have already mentioned, at that point of time the characterizations of the reduced inequalities were not known. Corollary~3.2 from \cite{GogStep2013} provides a characterization of \eqref{monot.ineq} but the result is non-standard. The earlier works on inequality \eqref{monot.ineq} can be found in \cite{Hein-Step,Myas-Per-Step,Goldman12, Goldman13}.

 We should note that Theorem~\ref{T:main} allowed us to give a scale of characterizations of \eqref{monot.ineq}, and we would like to provide the characterization here to integrate all relevant parameter choices into a single theorem for the reader's convenience (see, Theorem~\ref{Cor}).

As one can see in Section~\ref{Disc.Char.}, the discretization method transforms the inequality at hand equivalently to discrete inequalities that involve local characterizations of inequalities having low-order iterations. For this very reason, our aim in this paper is to revisit inequality \eqref{HH-ineq.} on $(a,b)$ where $-\infty\leq a<b \leq \infty$.

Let $-\infty\leq a<b \leq \infty$ and a weight be a positive measurable function on $(a, b)$. The principal goal of this study is to determine the scale of necessary and sufficient conditions on weights $u,v,w$ on $(a,b)$ for which
\begin{equation} \label{main-iterated}
\bigg(\int_a^b \bigg(\int_a^x \bigg(\int_a^t f(s)ds \bigg)^q u(t) dt \bigg)^{\frac{r}{q}} w(x) dx  \bigg)^{\frac{1}{r}} \leq C \bigg(\int_a^b f(x)^p v(x) dx \bigg)^{\frac{1}{p}}
\end{equation} 
holds for $f\in \mathfrak{M}^+(a,b)$, with exponents $1 \leq p < \infty$ and $0 < q,r < \infty$. It is worth noting that if $p<1$, inequality \eqref{main-iterated} only holds for trivial functions. 

Let us first review the essential notations and conventions before presenting our main results. The left and right sides of the inequality numbered by $(*)$ are denoted by LHS$(*)$ and RHS$(*)$, respectively. We put $0.\infty =  \infty/\infty= 0/0 =0$. The symbol $A \lesssim B$ means a constant $c>0$ exists such that $A \leq c B$ where $c$ depends only on the parameters $p,q,r$. If both $A \lesssim B$ and $B \lesssim A$, then we write $A \approx B$.

For  $1\leq p < \infty$, non-negative measurable functions $v$ and $w$ on $(a,b)$, and $x,y \in [a,b]$, denote by
\begin{align}\label{Vp} 
	V_p(x, y) := \left\{ \begin{array}{ccc}
		\big(\int_x^y v^{-\frac{1}{p-1}}\big)^{\frac{p-1}{p}}, & 1<p<\infty,  \\
			\esup\limits_{s \in (x, y)} v(s)^{-1}, & p=1,	\end{array}
	\right.
\end{align}
and
\begin{equation*}
    \mathcal{W}(x) = \int_x^b w(s)  \,ds.
\end{equation*}

Now, we are ready to formulate our main results. 

\begin{thm}\label{T:main}
Let $1 \leq p < \infty$, $0 < q, r < \infty$, $-\infty<\beta<1$ and let $u, v, w$ be weights on $(a,b)$ such that
$0< \mathcal{W}(t) <\infty$ for all $t\in(a,b)$. Then inequality \eqref{main-iterated} holds for all $f \in \mathfrak{M}^+(a, b)$ if and only if 

\rm(i) $p \leq r $, $p\leq q$ and
\begin{equation}\label{C1}
C_1 :=  \esup_{x \in (a, b)} \bigg(\int_x^b \mathcal{W}(t)^{-\beta} w(t)  \bigg(\int_x^t \mathcal{W}^{\frac{\beta q}{r}}u \bigg)^{\frac{r}{q}} dt \bigg)^{\frac{1}{r}} V_p(a, x) < \infty.
\end{equation}
Moreover, the best constant $C$ in inequality \eqref{main-iterated} satisfies $C \approx C_1$.

\rm(ii) $r < p \leq q$,
\begin{equation*}
C_2 :=\bigg(\int_a^b  \mathcal{W}(x)^{\frac{(1-\beta)p}{p-r} -1} w(x) \esup_{t \in (a, x)} \bigg(\int_t^{x} \mathcal{W}^{\frac{\beta q}{r}} u \bigg)^{\frac{rp}{q(p-r)}} V_p(a, t)^{\frac{pr}{p-r}} dx \bigg)^{\frac{p-r}{pr}}  < \infty,
\end{equation*}
and
\begin{align}\label{C3}
C_3 &:= \bigg( \int_a^b \bigg(\int_x^b \mathcal{W}(s)^{-\beta} w(s) \bigg(\int_{x}^s \mathcal{W}^{\frac{\beta q}{r}}u \bigg)^{\frac{r}{q}} ds \bigg)^{\frac{r}{p-r}} \mathcal{W}(x)^{-\beta}w(x)  \notag \\
 &\hskip+2cm \times \esup_{t \in (a, x)} \bigg(\int_t^{x} \mathcal{W}^{\frac{\beta q}{r}} u \bigg)^{\frac{r}{q}} V_p(a, t)^{\frac{pr}{p-r}} dx \bigg)^{\frac{p-r}{pr}}  < \infty.
\end{align}
Moreover, the best constant $C$ in inequality \eqref{main-iterated} satisfies $C \approx C_2 + C_3$.

\rm(iii) $q< p \leq r $, $C_1 < \infty$ and
\begin{equation*}
	C_4 := \sup_{x \in (a, b)} \mathcal{W}(x)^{\frac{1-\beta}{r}}  \bigg( \int_a^x \bigg(\int_{t}^x \mathcal{W}^{\frac{\beta q}{r}}u \bigg)^{\frac{q}{p-q}} \mathcal{W}(t)^{\frac{\beta q}{r}}u(t) V_p(a, t)^{\frac{pq}{p-q}} dt  \bigg)^{\frac{p-q}{pq}} < \infty,
\end{equation*}
where $C_1$ is defined in \eqref{C1}. Moreover, the best constant $C$ in inequality \eqref{main-iterated} satisfies $C \approx C_1 + C_4$.

\rm(iv) $r < p$, $q < p$, $C_3 < \infty$ and  
\begin{align*}
	C_5 &:= \bigg( \int_a^b \mathcal{W}(x)^{\frac{(1-\beta)p}{p-r} -1} w(x)\\
 &\hskip+2cm\times\bigg( \int_a^x \bigg(\int_{t}^x \mathcal{W}^{\frac{\beta q}{r}}u \bigg)^{\frac{q}{p-q}} \mathcal{W}(t)^{\frac{\beta q}{r}}u(t) V_p(a, t)^{\frac{pq}{p-q}} dt \bigg)^{\frac{r(p-q)}{q(p-r)}} dx \bigg)^{\frac{p-r}{pr}}
	< \infty,
\end{align*}
where $C_3$ is defined in \eqref{C3}. Moreover, the best constant $C$ in inequality \eqref{main-iterated} satisfies $C \approx C_3 + C_5$.
\end{thm}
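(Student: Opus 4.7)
I would follow the discretization/anti-discretization strategy introduced in \cite{GMPTU} for the companion inequality \eqref{HC-ineq.}, adapted to the double-Hardy structure of \eqref{main-iterated}. Since $\mathcal{W}$ is continuous, non-increasing and takes values in $(0,\infty)$ throughout $(a,b)$, I would extract a sequence $\{x_k\}_{k\in K}\subset(a,b)$ satisfying $\mathcal{W}(x_k)\approx 2^k$. On each block $\Delta_k:=[x_k,x_{k+1})$ the weight $\mathcal{W}$ is essentially constant, and for any exponent $\alpha\neq 0$ one has the splitting identity
$$\int_{\Delta_k}\mathcal{W}(t)^{\alpha-1}w(t)\,dt\;\approx\;2^{k\alpha}.$$
This is precisely the flexibility that will produce a \emph{one-parameter} scale of characterizations: mass can be transferred between the factors $\mathcal{W}(x)^{-\beta}w(x)\,dx$ on the outside and $\mathcal{W}^{\beta q/r}$ on the inside, with $\beta\in(-\infty,1)$ parameterizing the transfer.

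Next I would discretize both sides of \eqref{main-iterated}. Writing the outer integral as $\sum_k\int_{\Delta_k}$ and splitting the middle Hardy operator $\int_a^x(\cdots)^q u$ into a ``past'' part $\int_a^{x_k}$ plus a ``local'' part $\int_{x_k}^x$, the inequality reduces, via the splitting identity and standard discrete-to-continuous equivalences, to a weighted discrete inequality in which each local block contribution is controlled by a \emph{one}-iteration Hardy quantity. To each such one-iteration piece I would apply the classical characterization of the weighted Hardy inequality with kernel (\cite{MR-Saw89,BloKer91,Oinarov94,Step94,Krepela-ker}), yielding an elementary local condition on the weights restricted to $\Delta_k$. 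At this point the problem is reduced to a two-level discrete weighted Hardy inequality between a sequence $\{A_k\}$ (encoding the LHS blocks) and a sequence $\{B_k\}$ (encoding the RHS blocks).

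Solving this discrete inequality produces the four sub-cases of the theorem: $p\le q$ versus $q<p$ from the inner iteration, and $p\le r$ versus $r<p$ from the outer one; in the ``large'' parameter regimes a supremum condition emerges, in the ``small'' regimes an integral condition. Reassembling the resulting discrete sums via the splitting identity in the reverse direction turns each of them into the continuous quantities $C_1,\dots,C_5$, with the free exponent $\alpha=1-\beta$ materialising the one-parameter family. The main obstacle is the combinatorial bookkeeping: the three nested integrals in \eqref{main-iterated} are all anchored at $a$, so the discretization must track all three levels simultaneously, yielding a genuinely two-level discrete Hardy inequality rather than the one-level one treated in \cite{GMPTU}. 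Verifying that the anti-discretization recovers the full continuous scale uniformly in $\beta\in(-\infty,1)$, and that the same formulas unify $p=1$ with $p>1$ through the convention \eqref{Vp}, is where the technical work concentrates; the hypothesis $0<\mathcal{W}<\infty$ is what lets us carry this out without the non-degeneracy restrictions imposed in \cite{KrePick-CC}.
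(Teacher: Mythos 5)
Your plan follows essentially the same discretization/anti-discretization strategy as the paper: the $\mathcal{W}$-adapted sequence, the $\beta$-parameter mass transfer via the splitting identity, the decomposition of the middle Hardy operator into local and cumulative parts, local Hardy characterizations, and a discrete weighted Hardy step followed by anti-discretization. One small refinement: after the split, the paper works with \emph{two separate} discrete inequalities rather than a single ``two-level'' one — a diagonal $\ell^p\to\ell^r$ embedding for the local Hardy constants $B(x_{k-1},x_k)$ (Lemma~\ref{T:equiv.ineq.-1}, Theorem~\ref{T:disc.lp.emb.}) and a genuine discrete weighted Hardy inequality for the cumulative part involving $V_p(x_{k-1},x_k)$ (Lemma~\ref{T:equiv.ineq.-2}, Theorem~\ref{T:disc.hardy}) — and the local pieces need only the plain weighted Hardy characterization \eqref{B-char.}, not the kernel version.
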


\begin{thm}\label{Cor}
Let $0 < p,q <\infty$, $-\infty<\beta <1$ and $u,v,w$ be weights on $(a,b)$ such that $0< \mathcal{W}(t) <\infty$ for all $t\in(a,b)$. Then inequality 
\begin{equation}\label{monot.ineq-ab}
\bigg(\int_a^b \bigg(\int_a^x f(s) u(s) ds \bigg)^q  w(x) dx  \bigg)^{\frac{1}{q}} \leq C \bigg(\int_a^b f(x)^p v(x) dx \bigg)^{\frac{1}{p}}
\end{equation}
holds for all $f \in \mathfrak{M}^{\uparrow}(a,b)$ if and only if 

\rm(i) $p \leq q $, $p\leq 1$ and
\begin{equation}\label{C:C1}
	\mathcal{C}_1 :=  \esup_{x \in (a, b)} \bigg(\int_x^b \mathcal{W}(t)^{-\beta} w(t)  \bigg(\int_x^t \mathcal{W}^{\frac{\beta}{q}}u \bigg)^q dt \bigg)^{\frac{1}{q}} \bigg(\int_x^b v \bigg)^{-\frac{1}{p}} < \infty.
\end{equation}
Moreover, the best constant $C$ in inequality \eqref{monot.ineq-ab} satisfies $C \approx \mathcal{C}_1$.

\rm(ii) $q < p \leq 1$,
\begin{align*}
\mathcal{C}_2 &:=\bigg(\int_a^b \mathcal{W}(x)^{\frac{(1-\beta)p}{p-q}-1}w(x) \esup_{t \in (a, x)} \bigg(\int_t^{x} \mathcal{W}^{\frac{\beta}{q}}u \bigg)^{\frac{pq}{p-q}} \bigg(\int_t^b v \bigg)^{-\frac{q}{p-q}} dx \bigg)^{\frac{p-q}{pq}}  < \infty,
\end{align*}
and
\begin{align}\label{C:C3}
\mathcal{C}_3 &:= \bigg( \int_a^b \bigg(\int_x^b \mathcal{W}(s)^{-\beta}w(s) \bigg(\int_{x}^s \mathcal{W}^{\frac{\beta}{q}}u \bigg)^q ds \bigg)^{\frac{q}{p-q}} \mathcal{W}(x)^{-\beta} w(x)   \notag \\
&\hskip+2cm\times\esup_{t \in (a, x)} \bigg(\int_t^{x} \mathcal{W}^{\frac{\beta}{q}}u \bigg)^q \bigg(\int_t^b v \bigg)^{-\frac{q}{p-q}} dx \bigg)^{\frac{p-q}{pq}}  < \infty.
\end{align}
Moreover, the best constant $C$ in inequality \eqref{monot.ineq-ab} satisfies $C \approx \mathcal{C}_2 + \mathcal{C}_3$.

\rm(iii) $1 < p \leq q $, $C_1 < \infty$ and
\begin{equation*}
\mathcal{C}_4 := \sup_{x \in (a, b)} \mathcal{W}(x)^{\frac{1-\beta}{q}} \bigg( \int_a^x \bigg(\int_{t}^x \mathcal{W}^{\frac{\beta}{q}} u \bigg)^{\frac{1}{p-1}} \mathcal{W}(t)^{\frac{\beta}{q}} u(t) \bigg(\int_t^b v \bigg)^{-\frac{1}{p-1}} dt  \bigg)^{\frac{p-1}{p}} < \infty,
\end{equation*}
where $\mathcal{C}_1$ is defined in \eqref{C:C1}. Moreover, the best constant $C$ in inequality \eqref{monot.ineq-ab} satisfies $C \approx \mathcal{C}_1 + \mathcal{C}_4$.

\rm(iv) $q < p$, $1 < p$, $\mathcal{C}_3 < \infty$ and  
\begin{align*}
\mathcal{C}_5 &:= \bigg( \int_a^b \mathcal{W}^{\frac{(1-\beta)p}{p-q}-1} w(x) \\
& \hskip+2cm \times\bigg( \int_a^x \bigg(\int_{t}^x \mathcal{W}^{\frac{\beta}{q}} u \bigg)^{\frac{1}{p-1}} \mathcal{W}(t)^{\frac{\beta}{q}} u(t) \bigg(\int_t^b v \bigg)^{-\frac{1}{p-1}} dt \bigg)^{\frac{q(p-1)}{p-q}} dx \bigg)^{\frac{p-q}{pq}} < \infty,
\end{align*}
where $\mathcal{C}_3$ is defined in \eqref{C:C3}. Moreover, the best constant $C$ in inequality \eqref{monot.ineq-ab} satisfies $C \approx \mathcal{C}_3 + \mathcal{C}_5$.
\end{thm}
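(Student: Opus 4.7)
The proof of Theorem~\ref{Cor} is obtained by applying Theorem~\ref{T:main} under the substitution in which the inner parameter $q$ of Theorem~\ref{T:main} is taken as $1$ and the outer parameter $r$ of Theorem~\ref{T:main} is taken as the parameter $q$ of Theorem~\ref{Cor}. A first consistency check is that the four case distinctions of Theorem~\ref{T:main}---(i) $p\leq r$, $p\leq q$; (ii) $r<p\leq q$; (iii) $q<p\leq r$; (iv) $r<p$, $q<p$---correspond exactly to the four cases of Theorem~\ref{Cor}, and that the inner weight expression $\bigl(\int_x^t \mathcal{W}^{\beta q/r}u\bigr)^{r/q}$ appearing in $C_1,\ldots,C_5$ collapses under this substitution to $\bigl(\int_x^t \mathcal{W}^{\beta/q}u\bigr)^{q}$, matching the corresponding expression in $\mathcal{C}_1,\ldots,\mathcal{C}_5$.

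At the operator level, the reduction is handled by Fubini: for $f\in\mathfrak{M}^{\uparrow}(a,b)$ with $f(a^+)=0$ (the boundary constant is treated by testing $f\equiv c$ separately), write $f(s)=\int_a^s g(t)\,dt$ for some $g\in\mathfrak{M}^+(a,b)$. Then
\[
\int_a^x f(s)\,u(s)\,ds=\int_a^x\!\bigg(\int_a^s g(t)\,dt\bigg) u(s)\,ds,
\]
so the left-hand side of \eqref{monot.ineq-ab} coincides with the left-hand side of \eqref{main-iterated} applied to $g$ under the above parameter substitution.

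The crucial remaining difference between the conditions of the two theorems is the replacement $V_p(a,x)\mapsto\bigl(\int_x^b v\bigr)^{-1/p}$. This is justified by the monotone pointwise estimate
\[
f(s)\leq\bigg(\int_s^b v\bigg)^{-1/p}\|f\|_{L^p(v)},\qquad f\in\mathfrak{M}^{\uparrow}(a,b),
\]
which follows from $\|f\|_{L^p(v)}^p\geq f(s)^p\int_s^b v$ (valid because $f$ is non-decreasing) and is attained, up to absolute constants, by $f=\chi_{(s,b)}$. Thus for non-decreasing $f$ the dual-weight role of $V_p(a,x)$ in Theorem~\ref{T:main} is taken over by the pointwise factor $\bigl(\int_x^b v\bigr)^{-1/p}$. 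Necessity of $\mathcal{C}_1,\ldots,\mathcal{C}_5$ follows by testing \eqref{monot.ineq-ab} on $\chi_{(x,b)}$ and suitable further non-decreasing test functions, while sufficiency follows by applying Theorem~\ref{T:main} (with the above parameter substitution) to an auxiliary weight $\hat{v}$ chosen so that $V_p(a,x)$, computed with $\hat{v}$ in place of the original weight, equals $\bigl(\int_x^b v\bigr)^{-1/p}$, together with the one-sided Muckenhoupt--Bradley Hardy inequality $\|Hg\|_{L^p(v)}\lesssim\|g\|_{L^p(\hat{v})}$---which for this specific $\hat{v}$ holds with Muckenhoupt constant equal to $1$.

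The main obstacle lies in the sufficiency direction, since only one side of the Hardy equivalence $\|Hg\|_{L^p(v)}\approx\|g\|_{L^p(\hat{v})}$ is automatic; the monotone structure of $f=Hg$ compensates and ensures the matching up to absolute constants. A careful bookkeeping of these weight transforms through each of the four cases then reproduces the stated conditions $\mathcal{C}_1,\ldots,\mathcal{C}_5$.
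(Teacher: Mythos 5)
Your approach is not the one the paper takes, and it has two genuine gaps that prevent it from being a valid proof.

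The paper reduces \eqref{monot.ineq-ab} to an instance of \eqref{main-iterated} by the change of variable $f(x)^p=\int_a^x h$, not by $f=Hg$. Substituting this into the right-hand side and applying Fubini gives $\int_a^b f^p v = \int_a^b h(t)\bigl(\int_t^b v\bigr)\,dt$, and after raising both sides of \eqref{monot.ineq-ab} to the $p$-th power one obtains \eqref{special}, which is exactly \eqref{main-iterated} with $p_{\mathrm{main}}=1$, $q_{\mathrm{main}}=1/p$, $r_{\mathrm{main}}=q/p$, weight $v_{\mathrm{main}}(t)=\int_t^b v$, and best constant $C^p$. Because $p_{\mathrm{main}}=1$, Theorem~\ref{T:main} applies for the entire range $0<p<\infty$ of Theorem~\ref{Cor}, and $V_1(a,t)$ evaluates literally to $\bigl(\int_t^b v\bigr)^{-1}$ since $v_{\mathrm{main}}$ is non-increasing; each $C_j$ from Theorem~\ref{T:main} then becomes $\mathcal{C}_j^{\,p}$, so taking $p$-th roots finishes the proof. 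No auxiliary weight $\hat v$, no dual/reverse Hardy inequality, and no monotone-compensation step appear or are needed.

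Your substitution $q_{\mathrm{main}}=1$, $r_{\mathrm{main}}=q$, and (implicitly) $p_{\mathrm{main}}=p$ fails for two independent reasons. First, Theorem~\ref{T:main} is stated for $1\le p_{\mathrm{main}}<\infty$, so under your substitution cases (i) and (ii) of Theorem~\ref{Cor} with $p<1$ are outside its scope and the argument cannot even start there. Second, the sufficiency step is broken: writing $f=Hg$ turns the right-hand side of \eqref{monot.ineq-ab} into $\|Hg\|_{L^p(v)}$, and to close the chain from $\mathrm{LHS}\eqref{main-iterated}[g]\lesssim C_j\|g\|_{L^p(\hat v)}$ you need the \emph{reverse} bound $\|g\|_{L^p(\hat v)}\lesssim\|Hg\|_{L^p(v)}$, not the Muckenhoupt--Bradley Hardy inequality $\|Hg\|_{L^p(v)}\lesssim\|g\|_{L^p(\hat v)}$ that you invoke. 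You concede that "only one side of the Hardy equivalence is automatic", but the assertion that "the monotone structure of $f=Hg$ compensates" is not a proof, and in general no single $\hat v$ makes both $V_p(a,\cdot\,;\hat v)=\bigl(\int_\cdot^b v\bigr)^{-1/p}$ and the needed reverse estimate hold. Finally, note that your two consistency checks (matching four-case split and the collapse $\mathcal{W}^{\beta q/r}\to\mathcal{W}^{\beta/q}$, $r/q\to q$) are satisfied by both your substitution and the paper's, so they are unable to detect that the substitution is wrong.
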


Proofs of Theorem~\ref{T:main} and Theorem~\ref{Cor} will be given in  Section~\ref{S:Proofs}.

\section{Preliminary Results}

In this section, we cover the foundations of discretization and several new results that will be employed frequently throughout the proof of the main theorem.

\begin{defi}
Let $N \in \mathbb{Z} \cup \{-\infty\}$, $M \in \mathbb{Z} \cup \{+\infty\}$, $N<M$, and $\{a_k\}_{k=N}^M$ be a sequence of positive numbers. We say that $\{a_k\}_{k=N}^M$ is \textit{geometrically decreasing} if
\begin{equation*}
\sup\bigg\{\frac{a_{k+1}}{a_k},\quad N\leq k \leq M\bigg\} < 1.
\end{equation*}
\end{defi}

\begin{lem}\cite{GogPick2003}\label{L:dec-equiv-1}
Let $\alpha >0$ and $n \in \mathbb{Z}\cup\{-\infty\}$.  If $\{\tau_k\}_{k=n}^{\infty}$ is a  geometrically decreasing sequence,  then,
\begin{equation}
\sup_{n\leq k<\infty} \tau_k \sum_{i=n}^k a_i  \approx \sup_{n\leq k <\infty} \tau_k a_k, \label{sup-sum}
\end{equation} 
\begin{equation}
\sum_{k=n}^{\infty} \tau_k  \bigg(\sum_{i=n}^k a_i \bigg)^{\alpha}  \approx \sum_{k=n}^{\infty} \tau_k a_k^{\alpha}, \label{sum-sum}
\end{equation}
and
\begin{equation}
\sum_{k=n}^{\infty} \tau_k   \sup_{n \leq i \leq k} a_i  \approx \sum_{k=n}^{\infty} \tau_k  a_k, \label{sum-sup}
\end{equation}
for all non-negative sequences $\{a_k\}_{k=n}^{\infty} $.
\end{lem}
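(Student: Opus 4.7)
The plan is to exploit the single quantitative consequence of geometric decrease, namely that $\tau_{k+j}\le \rho^{j}\tau_k$ for some fixed $\rho\in(0,1)$, so in particular $\sum_{k\ge j}\tau_k\le \tau_j/(1-\rho)$. With this in hand, all three equivalences follow by routine reorderings of sums, and I would organize the proof so that (sup-sum) is proved first, then (sum-sum) is proved directly in two cases, and finally (sum-sup) is deduced from (sum-sum).

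For (sup-sum), the $\gtrsim$ direction is trivial since $a_k\le\sum_{i=n}^k a_i$. For the $\lesssim$ direction I would write
\[
\tau_k\sum_{i=n}^k a_i=\sum_{i=n}^k \tau_k a_i\le \sum_{i=n}^k \rho^{k-i}\tau_i a_i\le \Big(\sup_{n\le i\le k}\tau_i a_i\Big)\sum_{j=0}^{\infty}\rho^j,
\]
and then take the supremum in $k$.

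For (sum-sum) the $\gtrsim$ direction is again immediate. The $\lesssim$ direction splits by the value of $\alpha$. When $0<\alpha\le 1$, subadditivity $t\mapsto t^{\alpha}$ gives $(\sum_{i=n}^k a_i)^{\alpha}\le\sum_{i=n}^k a_i^{\alpha}$, and swapping order of summation plus $\sum_{k\ge i}\tau_k\lesssim\tau_i$ finishes the argument. When $\alpha>1$ the main trick is H\"older with an auxiliary geometric weight: choose $\varepsilon\in(\rho^{1/\alpha},1)$ and write
\[
\sum_{i=n}^k a_i=\sum_{i=n}^k \varepsilon^{(k-i)/\alpha'}\cdot\varepsilon^{-(k-i)/\alpha'}a_i\le \Big(\sum_{i=n}^k\varepsilon^{k-i}\Big)^{1/\alpha'}\Big(\sum_{i=n}^k\varepsilon^{-\alpha(k-i)/\alpha'}a_i^{\alpha}\Big)^{1/\alpha}.
\]
Raising to the $\alpha$, multiplying by $\tau_k$, and swapping order of summation reduces the problem to showing that $\sum_{k\ge i}\tau_k\varepsilon^{-\alpha(k-i)/\alpha'}\lesssim\tau_i$; this is where the choice $\varepsilon>\rho^{1/\alpha}$ is used so that the resulting geometric series with ratio $\rho\varepsilon^{-\alpha/\alpha'}<1$ converges. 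The delicate choice of $\varepsilon$ in this H\"older step is the only non-routine ingredient of the proof.

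For (sum-sup), one direction is trivial from $a_k\le\sup_{n\le i\le k}a_i$, and the other direction follows immediately from the pointwise bound $\sup_{n\le i\le k}a_i\le\sum_{i=n}^k a_i$ combined with (sum-sum) applied at exponent $\alpha=1$. This gives all three equivalences with constants depending only on $\alpha$ and on $\rho=\sup_k\tau_{k+1}/\tau_k$.
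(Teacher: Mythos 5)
Your proof is correct. Note that the paper itself offers no proof of this lemma — it is imported by citation from \cite{GogPick2003} — so there is no internal argument to compare against; what you have written is a valid self-contained proof. The key steps all check out: from $\tau_{k+1}/\tau_k\le\rho<1$ you correctly deduce $\tau_{k}\le\rho^{k-i}\tau_i$ and $\sum_{k\ge i}\tau_k\lesssim\tau_i$; \eqref{sup-sum} follows by the $\rho^{k-i}$ insertion and geometric summation; \eqref{sum-sum} follows by $\alpha$-subadditivity when $\alpha\le1$ and by the auxiliary-weight H\"older step when $\alpha>1$; and \eqref{sum-sup} reduces to \eqref{sum-sum} with $\alpha=1$. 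One small remark on the H\"older step: what the final geometric series actually requires is $\rho\varepsilon^{-\alpha/\alpha'}=\rho\varepsilon^{-(\alpha-1)}<1$, i.e.\ $\varepsilon>\rho^{1/(\alpha-1)}$; your stated choice $\varepsilon>\rho^{1/\alpha}$ is a (strictly stronger, hence sufficient) condition since $\rho^{1/\alpha}\ge\rho^{1/(\alpha-1)}$ for $0<\rho<1$ and $\alpha>1$, so the argument goes through as written. The constants indeed depend only on $\alpha$ and $\rho$, which is consistent with the paper's convention that implicit constants depend only on the parameters.
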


\begin{lem} \label{dec-equiv}
Let $\alpha >0$ and $n  \in \mathbb{Z}\cup\{-\infty\}$.  Assume that $\{x_k\}_{k=n}^{\infty} \subset (a,b)$ is a strictly increasing sequence. If $\{\tau_k\}_{k=n+1}^{\infty}$ is a  geometrically decreasing sequence,  then,
\begin{equation}
\sup_{n+1\leq k <\infty} \tau_k \bigg( \int_{x_{n}}^ {x_k} g \bigg) \approx \sup_{n +1\leq k <\infty} \tau_k \bigg( \int_{x_{k-1}}^{x_{k}} g\bigg), \label{dec-sup-sum}
\end{equation}
\begin{equation}
\sum_{k=n+1}^{\infty} \tau_k  \bigg( \int_{x_{n}}^{x_k} g \bigg)^{\alpha}  \approx \sum_{k=n+1}^{\infty} \tau_k  \bigg( \int_{x_{k-1}}^{x_k} g \bigg)^{\alpha}, \label{dec-sum-sum}
\end{equation}
and
\begin{equation}
\sum_{k=n+1}^{\infty} \tau_k   \esup_{s \in (x_{n}, x_k)}  g(s)  \approx \sum_{k=n+1}^{\infty} \tau_k  \esup_{s \in (x_{k-1}, x_{k})}  g(s), \label{dec-sum-sup}
\end{equation}
for all non-negative measurable $g$ on $(a,b)$.
\end{lem}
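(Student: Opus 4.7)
The plan is to derive Lemma~\ref{dec-equiv} from its discrete counterpart Lemma~\ref{L:dec-equiv-1} by discretizing the non-negative function $g$ along the partition $\{x_k\}_{k=n}^{\infty}$. The sequence $\{\tau_k\}_{k=n+1}^{\infty}$ is geometrically decreasing in exactly the sense required by Lemma~\ref{L:dec-equiv-1} (whose starting index is allowed to be any element of $\mathbb{Z}\cup\{-\infty\}$, so we may apply it with starting index $n+1$). Once the continuous quantities appearing in the left-hand sides are rewritten as sums or suprema over a suitably chosen sequence, the three equivalences \eqref{sup-sum}, \eqref{sum-sum}, \eqref{sum-sup} transfer directly.

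For \eqref{dec-sup-sum} and \eqref{dec-sum-sum}, I would set
\[
a_i := \int_{x_{i-1}}^{x_i} g, \qquad i \geq n+1,
\]
and use additivity of the integral over consecutive intervals to write
\[
\int_{x_n}^{x_k} g \;=\; \sum_{i=n+1}^{k} a_i \qquad (k \geq n+1).
\]
Substituting this into the left-hand side of \eqref{dec-sup-sum} and invoking \eqref{sup-sum} (with the starting index $n+1$) yields
\[
\sup_{n+1 \leq k < \infty} \tau_k \sum_{i=n+1}^{k} a_i \;\approx\; \sup_{n+1 \leq k < \infty} \tau_k a_k,
\]
which is exactly \eqref{dec-sup-sum}. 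Applying \eqref{sum-sum} in the same way delivers \eqref{dec-sum-sum}.

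For \eqref{dec-sum-sup}, the analogous choice is
\[
b_i := \esup_{s \in (x_{i-1}, x_i)} g(s), \qquad i \geq n+1.
\]
The elementary identity required here is
\[
\esup_{s \in (x_n, x_k)} g(s) \;=\; \sup_{n+1 \leq i \leq k} b_i,
\]
which holds because $(x_n, x_k)$ agrees with $\bigcup_{i=n+1}^{k}(x_{i-1}, x_i)$ up to the null set $\{x_{n+1},\ldots,x_{k-1}\}$, and the essential supremum over a finite union of open intervals is the maximum of the essential suprema over the pieces. Plugging this into the left-hand side of \eqref{dec-sum-sup} and applying \eqref{sum-sup} with $a_i = b_i$ produces the right-hand side of \eqref{dec-sum-sup}.

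I do not expect any serious obstacle here: the argument is a straightforward translation of the discrete Lemma~\ref{L:dec-equiv-1} to the integral setting. The only bookkeeping issues are the harmless shift of the starting index from $n$ to $n+1$ and the essential-supremum-over-union identity used in the final part; everything else is routine substitution.
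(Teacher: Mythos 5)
Your proposal is correct and follows exactly the same route as the paper: decompose $\int_{x_n}^{x_k}g$ as $\sum_{i=n+1}^k\int_{x_{i-1}}^{x_i}g$ and $\esup_{s\in(x_n,x_k)}g$ as $\sup_{n+1\leq i\leq k}\esup_{s\in(x_{i-1},x_i)}g$, then apply \eqref{sup-sum}, \eqref{sum-sum}, \eqref{sum-sup} with the starting index shifted to $n+1$. No differences of substance.
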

\begin{proof}
For each $n \in \mathbb{Z} \cup \{-\infty\}$, we can write 
$$
\int_{x_{n}}^{x_k} g = \sum_{i=n+1}^k \int_{x_{i-1}}^{x_i} g.
$$   
Then \eqref{dec-sup-sum} and \eqref{dec-sum-sum} are direct consequences of \eqref{sup-sum} and \eqref{sum-sum}, respectively. 

Similarly, for each $n \in \mathbb{Z} \cup \{-\infty\}$, we have
$$
\esup_{s \in (x_{n}, x_k)}  g(s) = \sup_{n+1 \leq i \leq k} \esup_{s \in (x_{i-1}, x_i)}  g(s),
$$
so that applying \eqref{sum-sup}, we obtain \eqref{dec-sum-sup}.
\end{proof}

\begin{lem}
Let  $\alpha >0$ and $n \in \mathbb{Z}\cup\{-\infty\}$. Assume that $\{x_k\}_{k=n}^{\infty}\subset (a,b)$ is a strictly increasing sequence, $\{\tau_k\}_{k=n+1}^{\infty}$ is a geometrically decreasing sequence, and $\{\sigma_k\}_{k=n}^{\infty}$ is a positive non-decreasing sequence. Then 
\begin{equation}\label{3-sup-equiv}
\sup_{n+1 \leq k <\infty}  \tau_k  \sup_{n \leq i < k} \bigg(\int_{x_i}^{x_k} g \bigg)^{\alpha} \sigma_i \approx \sup_{n+1 \leq k <\infty} \tau_k  \bigg(\int_{x_{k-1}}^{x_k} g\bigg)^{\alpha} \sigma_{k-1}
\end{equation}
and
\begin{equation}\label{3-sum-equiv}
\sum_{k=n+1}^{\infty} \tau_k  \sup_{n \leq i < k} \bigg(\int_{x_i}^{x_k} g\bigg)^{\alpha} \sigma_i \approx \sum_{k=n+1}^{\infty} \tau_k  \bigg(\int_{x_{k-1}}^{x_k} g\bigg)^{\alpha} \sigma_{k-1}
\end{equation}
hold for all non-negative measurable $g$ on $(a,b)$.
\end{lem}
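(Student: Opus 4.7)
The plan is to handle both \eqref{3-sup-equiv} and \eqref{3-sum-equiv} in parallel. In each case the bound ``$\gtrsim$'' is immediate by restricting the inner supremum to $i=k-1$, so the real work is the reverse inequality. Throughout I would write $b_j:=\int_{x_{j-1}}^{x_j}g$ so that $\int_{x_i}^{x_k}g=\sum_{j=i+1}^{k}b_j$ for every $n\leq i<k$.

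The decisive step is a short algebraic trick that uses the monotonicity of $\{\sigma_k\}$ inside the $j$-sum before the power $\alpha$ is taken. Namely, I would write
\begin{equation*}
\sigma_i\bigg(\int_{x_i}^{x_k}g\bigg)^{\alpha}=\bigg(\sigma_i^{1/\alpha}\sum_{j=i+1}^{k}b_j\bigg)^{\alpha}\leq\bigg(\sum_{j=i+1}^{k}\sigma_{j-1}^{1/\alpha}b_j\bigg)^{\alpha}\leq\bigg(\sum_{j=n+1}^{k}\sigma_{j-1}^{1/\alpha}b_j\bigg)^{\alpha},
\end{equation*}
where the first inequality uses $\sigma_i\leq\sigma_{j-1}$ for $j\geq i+1$ and the second just drops positive terms. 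Crucially, the final bound is independent of $i$, so it dominates the inner $\sup_{n\leq i<k}$ in both target inequalities.

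After this reduction I would invoke Lemma~\ref{L:dec-equiv-1} with the geometrically decreasing sequence $\{\tau_k^{1/\alpha}\}$ (which is geometrically decreasing because $\{\tau_k\}$ is) and the auxiliary sequence $c_j:=\sigma_{j-1}^{1/\alpha}b_j$. For \eqref{3-sup-equiv} I would pull the $\alpha$-th power outside the supremum, apply \eqref{sup-sum} to obtain $\sup_k\tau_k^{1/\alpha}\sum_{j=n+1}^{k}c_j\approx\sup_k\tau_k^{1/\alpha}\sigma_{k-1}^{1/\alpha}b_k$, and take the $\alpha$-th power to recover $\sup_k\tau_k\sigma_{k-1}b_k^{\alpha}$. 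For \eqref{3-sum-equiv} I would apply \eqref{sum-sum} directly to $\sum_k\tau_k\big(\sum_{j=n+1}^{k}c_j\big)^{\alpha}$, which reduces to $\sum_k\tau_k c_k^{\alpha}=\sum_k\tau_k\sigma_{k-1}b_k^{\alpha}$.

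The main obstacle I anticipate is spotting the identity $\sigma_i A^{\alpha}=(\sigma_i^{1/\alpha}A)^{\alpha}$: without it there is no elementary sub- or super-additivity available that handles $(\sum_j b_j)^{\alpha}$ uniformly for all $\alpha>0$, and in particular for $\alpha>1$ any naive distribution of the $\alpha$-th power across the $j$-sum loses a factor that grows with the length of the sum. Once this algebraic manoeuvre is in place, the remainder is a direct application of Lemma~\ref{L:dec-equiv-1}.
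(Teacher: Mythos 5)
Your proof is correct. For \eqref{3-sum-equiv} it coincides with the paper's argument: both bound the inner supremum by moving $\sigma_i^{1/\alpha}$ inside the telescoping sum via monotonicity, making the expression independent of $i$, and then apply \eqref{sum-sum}. For \eqref{3-sup-equiv}, however, you take a genuinely different route. The paper interchanges the two suprema, applies \eqref{dec-sup-sum} to the inner supremum over $k$ (with $i$ frozen), interchanges back, and then uses $\sup_{n\le i\le k-1}\sigma_i=\sigma_{k-1}$. You instead reuse the identical $\sigma_i^{1/\alpha}$ trick that handles the sum case, pull the power $\alpha$ out of the supremum, note that $\{\tau_k^{1/\alpha}\}$ is still geometrically decreasing, and apply \eqref{sup-sum}. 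The advantage of your route is uniformity: one mechanism proves both \eqref{3-sup-equiv} and \eqref{3-sum-equiv}, and it only needs \eqref{sup-sum}/\eqref{sum-sum} rather than the integral variant \eqref{dec-sup-sum} from Lemma~\ref{dec-equiv}. The paper's interchange argument is arguably more transparent for the supremum case, since it does not require introducing the $1/\alpha$-power of $\sigma$, but it does not carry over to the sum case, which is why the paper switches strategy between the two parts.
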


\begin{proof}
Let us start with the equivalency \eqref{3-sup-equiv}. Since $\{\tau_k\}_{k=n+1}^{\infty}$ is a geometrically decreasing sequence, interchanging supremum and \eqref{dec-sup-sum} give 
\begin{equation*}
LHS\eqref{3-sup-equiv} = \sup_{n+1 \leq i <\infty} \sigma_i \sup_{i+1 \leq k <\infty} \tau_k \bigg(\int_{x_i}^{x_k} g\bigg)^{\alpha} \approx    \sup_{n+1 \leq i<\infty} \sigma_i \sup_{i+1 \leq k<\infty} \tau_k \bigg(\int_{x_{k-1}}^{x_k} g\bigg)^{\alpha}.
\end{equation*}
Interchanging supremum once again and monotonicity of $\{\sigma_k\}_{k=n}^{\infty}$ results in 
\begin{equation*}
LHS\eqref{3-sup-equiv} \approx \sup_{n+1 \leq k<\infty} \tau_k  \bigg(\int_{x_{k-1}}^{x_k} g\bigg)^{\alpha} \sup_{n\leq i \leq k-1} \sigma_i = RHS\eqref{3-sup-equiv}. 
\end{equation*}

Let us now tackle \eqref{3-sum-equiv}. Monotonicity of $\{\sigma_k\}_{k=n}^{\infty}$ gives that
\begin{align*}
LHS\eqref{3-sum-equiv} \leq \sum_{k=n+1}^{\infty} \tau_k  \sup_{n\leq i < k} \bigg(\sum_{j=i}^{k-1}  \sigma_j^{\frac{1}{\alpha}}  \int_{x_j}^{x_{j+1}} g \bigg)^{\alpha} = \sum_{k=n+1}^{\infty} \tau_k  \bigg(\sum_{j=n}^{k-1}  \sigma_j^{\frac{1}{\alpha}}  \int_{x_j}^{x_{j+1}} g \bigg)^{\alpha}.
\end{align*}
Then, using \eqref{sum-sum}, we have the following upper estimate
\begin{align*}
LHS\eqref{3-sum-equiv} \leq \sum_{k=n+1}^{\infty} \tau_k  \bigg(\sum_{j=n+1}^{k}  \sigma_{j-1}^{\frac{1}{\alpha}} \int_{x_{j-1}}^{x_{j}} g \bigg)^{\alpha} \approx RHS\eqref{3-sum-equiv}.
\end{align*}
On the other hand, the reverse estimate is clear, and the proof is complete. 
\end{proof}

\begin{defi}
Let $w$ be a non-negative measurable function on $(a,b)$  such that
$0< \mathcal{W}(t) <\infty$ for all $t\in(a,b)$. A strictly increasing sequence  $\{x_k\}_{k=N}^{\infty}\subset [a,b]$ is said to be a discretizing sequence of the function $ {\mathcal W}$, if it satisfies $ {\mathcal W}(x_k)  \approx 2^{-k}$, $N \leq k <  \infty $. If $N > -\infty$ then $x_{N} : =a$, otherwise  $x_{-\infty}:= \lim_{k\rightarrow -\infty} x_k   = a$.	
\end{defi}

\begin{rem}
Let $w$ be a non-negative measurable function on $(a,b)$ such that $0< \mathcal{W}(t) <\infty$ for all $t\in(a,b)$. The Darboux property of continuous functions implies that the discretizing sequence exists for $\mathcal{W}$. We can define the discretizing sequence in the following way: 
\begin{equation*}
N = \sup\bigg\{k\in \mathbb{Z} \colon 2^{-k} \geq \mathcal{W}(t) \,\,  \textit{for every} \,\, t \in (a,b) \bigg\}.
\end{equation*}
If the set is empty then $N = -\infty$. If $\lim_{t \to a+} \mathcal{W}(t) <\infty$, then $N>-\infty$ and $x_N = a$
and for $k> N$
\begin{equation*}
x_k = \sup\bigg\{t \in (a,b) \colon 2^{-k} \leq \mathcal{W}(t)  \bigg\}.   \end{equation*}
\end{rem}

It is worth noting that if $N=-\infty$, then $N+1$ is also $-\infty$.

\begin{lem}
Let $\alpha > 0$ and $N \in \mathbb{Z}\cup \{-\infty\}$. Assume that $w$ is a weight on $(a, b)$ such that $0< \mathcal{W}(t) <\infty$ for all $t\in(a,b)$,
and  $\{x_k\}_{k=N}^{\infty}$ is a discretizing sequence of the function ${\mathcal W}$. Then for any $n \colon N\leq n$, 
\begin{equation}\label{int.equiv}
\int_{x_n}^b {\mathcal W}(x)^{\alpha -1} w(x) h(x)  dx \approx \sum_{k=n+1}^{\infty} 2^{-k \alpha } h(x_k)
\end{equation}
and
\begin{equation}\label{sup.equiv}
\esup_{x \in (x_n,b)}  {\mathcal W}(x)^{\alpha} h(x) \approx \sup_{n+1\leq k} 2^{-k\alpha} h(x_k)
\end{equation}
hold for all non-negative and non-decreasing $h$ on $(a, b)$. 
\end{lem}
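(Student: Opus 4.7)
The plan is to reduce both statements to local estimates on the dyadic intervals $(x_{k-1}, x_k)$ for $k\ge n+1$, using three facts: (a) $\mathcal{W}$ is absolutely continuous with $\mathcal{W}'(x) = -w(x)$, (b) the discretizing property $\mathcal{W}(x_k)\approx 2^{-k}$, and (c) the monotonicity of $h$ (non-decreasing) combined with the monotonicity of $\mathcal{W}$ (non-increasing). The key constant-free local computation is
\[
\int_{x_{k-1}}^{x_k}\mathcal{W}(x)^{\alpha-1} w(x)\,dx = \tfrac{1}{\alpha}\bigl(\mathcal{W}(x_{k-1})^\alpha - \mathcal{W}(x_k)^\alpha\bigr) \approx 2^{-k\alpha},
\]
where the last equivalence uses $2^{-(k-1)\alpha}-2^{-k\alpha} = (2^\alpha-1)2^{-k\alpha}$ and the discretizing property.

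For \eqref{int.equiv}, I would split $\int_{x_n}^b = \sum_{k=n+1}^\infty \int_{x_{k-1}}^{x_k}$ and sandwich $h$ on each block by its endpoint values, obtaining
\[
h(x_{k-1})\cdot 2^{-k\alpha} \lesssim \int_{x_{k-1}}^{x_k}\mathcal{W}^{\alpha-1} w\, h \lesssim h(x_k)\cdot 2^{-k\alpha}.
\]
Summing, the upper bound $\int_{x_n}^b\mathcal{W}^{\alpha-1} w\, h \lesssim \sum_{k=n+1}^\infty 2^{-k\alpha}h(x_k)$ is immediate. For the reverse estimate, an index shift gives
\[
\sum_{k=n+1}^\infty h(x_{k-1})\,2^{-k\alpha} = 2^{-\alpha}\sum_{j=n}^\infty h(x_j)\,2^{-j\alpha} \ge 2^{-\alpha}\sum_{j=n+1}^\infty h(x_j)\,2^{-j\alpha},
\]
so the lower sandwich bound already delivers $\gtrsim \sum_{k=n+1}^\infty 2^{-k\alpha}h(x_k)$.

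For \eqref{sup.equiv}, I would write $\esup_{(x_n,b)} = \sup_{k\ge n+1}\esup_{(x_{k-1},x_k)}$. On $(x_{k-1},x_k)$, since $\mathcal{W}$ is non-increasing and $h$ non-decreasing, $\mathcal{W}(x)^\alpha h(x) \le \mathcal{W}(x_{k-1})^\alpha h(x_k) \approx 2^{-k\alpha}h(x_k)$, which gives the $\lesssim$ direction. For $\gtrsim$, it is cleaner to evaluate on the \emph{next} block $(x_k,x_{k+1})$: there $\mathcal{W}(x)\ge\mathcal{W}(x_{k+1})\approx 2^{-(k+1)}$ and $h(x)\ge h(x_k)$, yielding $\esup_{(x_k,x_{k+1})}\mathcal{W}^\alpha h \gtrsim 2^{-k\alpha}h(x_k)$; taking the supremum over $k\ge n$ and using the geometric factor to absorb the reindexing finishes the job.

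The only subtlety is the bookkeeping of index shifts, because the sandwich naturally produces endpoint terms ($h(x_{k-1})$ on the lower side, $h(x_{k+1})$ in the sup bound) which differ from the target terms $h(x_k)$. This is resolved uniformly by the factor $2^{-\alpha}$ arising from the geometric decay of the weights, and the boundary contribution at $k=n$ is discarded in one direction without loss. The case $N=-\infty$ requires no separate treatment since $n$ is a fixed integer with $n\ge N$ and the sum starts at $k=n+1$, so all expressions involve only a proper tail of the sequence.
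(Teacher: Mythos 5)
Your proposal is correct and follows essentially the same route as the paper: both split $(x_n,b)$ into the blocks $(x_{k-1},x_k)$, evaluate the local weight $\int_{x_{k-1}}^{x_k}\mathcal{W}^{\alpha-1}w\approx 2^{-k\alpha}$ via the exact antiderivative $-\tfrac1\alpha\mathcal{W}^\alpha$ together with $\mathcal{W}(x_k)\approx 2^{-k}$, sandwich $h$ by its endpoint values using monotonicity, and then absorb the one-unit index shift into the geometric factor $2^{-\alpha}$ (the paper does this by integrating over the shifted block $(x_k,x_{k+1})$; you do it by reindexing the sum of left-endpoint values — the two are interchangeable). Your handling of the lower bound in the supremum estimate via the next block $(x_k,x_{k+1})$ is, if anything, slightly more careful than the paper's terse identification $\esup_{(x_{k-1},x_k)}h = h(x_k)$, which silently uses a left-continuity convention.
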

\begin{proof}
Monotonicity of $h$ and properties of the discretizing sequence $\{x_k\}_{k=N}^{\infty}$ yield
\begin{align*}
LHS\eqref{int.equiv} &= \sum_{k=n+1}^{\infty} \int_{x_{k-1}}^{x_k} h(x)  {\mathcal W}(x)^{\alpha-1} w(x) dx  \lesssim \sum_{k=n+1}^{\infty} h(x_k) \int_{x_{k-1}}^{x_k}  d\Big[-{\mathcal W}(x)^{\alpha} \Big] \\
& \approx \sum_{k=n+1}^{\infty} 2^{-k\alpha} h(x_k) = RHS\eqref{int.equiv}, 
\end{align*}
and, conversely
\begin{align*}
LHS\eqref{int.equiv} & \geq \sum_{k=n+1}^{\infty} \int_{x_k}^{x_{k+1}} h(x) {\mathcal W}(x)^{\alpha-1} w(x) dx  \gtrsim \sum_{k=n+1}^{\infty} h(x_k) \int_{x_k}^{x_{k+1}}  d\Big[-{\mathcal W}(x)^{\alpha} \Big] \\
& \approx \sum_{k=n+1}^{\infty} 2^{-k\alpha}  h(x_k) = RHS\eqref{int.equiv}.
\end{align*}
Thus, \eqref{int.equiv} holds. 

On the other hand, similarly, 
\begin{align*}
LHS\eqref{sup.equiv} =	\sup_{n+1 \leq k < \infty} \esup_{x \in (x_{k-1},x_k)}  {\mathcal W}(x)^{\alpha} h(x) \approx \sup_{n+1 \leq k < \infty} 2^{-k\alpha} \esup_{x \in (x_{k-1},x_k)}  h(x) = RHS\eqref{sup.equiv}
\end{align*}
holds.
\end{proof}

\begin{thm} \cite[Theorem~4.5]{GPU-JFA}	\label{T:disc.lp.emb.}
Let $p,q \in (0,\infty)$, $N, M \in {\mathbb{Z}}\cup\{-\infty, \infty\}$, $N<M$. Assume that $\{w_k\}_{k=N}^M$ and $\{v_k\}_{k=N}^M$ be two sequences of positive real numbers. Then inequality
\begin{equation}\label{Landau}
    \left(\sum_{k=N}^M a_k^q v_k^q \right)^{\frac{1}{q}} \leq
    C \left(\sum_{k=N}^M a_k^p w_k^p \right)^{\frac{1}{p}}
\end{equation}
holds for every sequence  $\{a_k\}_{k=N}^M$  of non-negative real numbers if and only if either

{\rm(i)} $p \leq q $ and
\begin{equation*}
    L_1= \sup_{N \leq k \leq M} \left\{v_k w_k^{-1} \right\}< \infty,
\end{equation*}
or

{\rm(ii)} $p > q$ and
\begin{equation*}
    L_2= \left(\sum_{k=N}^M v_k^{\frac{pq}{p-q}} w_k^{-\frac{pq}{p-q}} \right)^{\frac{p-q}{pq}} < \infty.
\end{equation*}
Moreover, if we denote by $C$ the optimal embedding constant in~\eqref{Landau}, then
\begin{equation*}
    C =
        \begin{cases}
            L_1 &\text{in the case \textup{(i)},}
                \\
            L_2 &\text{in the case \textup{(ii)}.}
        \end{cases}
\end{equation*}
\end{thm}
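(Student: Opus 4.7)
The plan is to split the argument into the two exponent regimes $p\le q$ and $p>q$, and in each case establish necessity by a tailored test sequence and sufficiency by a single application of a classical inequality.

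In the case $p\le q$, necessity is immediate: testing the inequality on the one-atom sequence $a_k=\delta_{jk}$ collapses the left-hand side to $v_j$ and the right-hand side to $Cw_j$, forcing $v_j/w_j\le C$ for every $j$ and hence $L_1\le C$. For sufficiency, factor $v_k=(v_k/w_k)\,w_k$ inside the $q$-norm on the left, pull the uniform bound $L_1$ outside, and then apply the monotone sequence embedding $\|\cdot\|_{\ell^q}\le\|\cdot\|_{\ell^p}$ (valid for $p\le q$) to the sequence $\{a_kw_k\}$. This chain yields the inequality with constant $L_1$, so $C\approx L_1$.

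In the case $p>q$, sufficiency follows at once from H\"older's inequality with conjugate exponents $p/q$ and $p/(p-q)$: writing $a_k^q v_k^q=(a_kw_k)^q(v_k/w_k)^q$ and applying H\"older to the two factors produces exactly the constant $L_2$. For necessity, the natural extremal choice is
$$a_k^{\ast}=v_k^{q/(p-q)}\,w_k^{-p/(p-q)},$$
for which a routine exponent bookkeeping shows $(a_k^\ast)^pw_k^p=(a_k^\ast)^qv_k^q=v_k^{pq/(p-q)}w_k^{-pq/(p-q)}$. Substituting into the inequality, raising each side to its natural power, and cancelling the common sum leaves precisely $L_2\le C$.

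The one genuine obstacle is that the extremal sequence $\{a_k^\ast\}$ only lies in the relevant space when $L_2<\infty$, which is part of what we are trying to prove. The standard workaround is to argue first on an arbitrary finite window $N'\le k\le M'$ (with $N'>-\infty$ and $M'<+\infty$ if the original range is unbounded), producing the bound $L_2(N',M')\le C$ with the same constant $C$ as in the hypothesised inequality, and then to let $N'\to N$ and $M'\to M$; monotone convergence transports the bound to the full range and simultaneously shows that $L_2<\infty$ must hold for the inequality to be valid at all. Once this is in place, the sharp constant identifications $C\approx L_1$ and $C\approx L_2$ stated in the theorem follow immediately.
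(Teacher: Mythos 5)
Your argument is correct and is the standard proof of this discrete Lebesgue-space embedding (the theorem is cited in the paper from \cite{GPU-JFA} without a written-out proof): one-atom test sequences plus the embedding $\ell^p\hookrightarrow\ell^q$ for $p\le q$, and H\"older with conjugate exponents $p/q$, $p/(p-q)$ plus the extremal sequence on exhausting finite windows for $p>q$. One small remark: both of your directions give exact constants (sufficiency with constant $1$, necessity with constant $1$), so you obtain the equalities $C=L_1$ and $C=L_2$ as the theorem claims, not merely $C\approx L_i$ as your final sentence suggests.
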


\begin{thm} \cite[Theorem~4.6]{GPU-JFA}\label{T:disc.hardy}
Assume that $0 < p,q < \infty$, $N, M \in {\mathbb{Z}}\cup\{-\infty, \infty\}$, $N<M$,   $\{a_k\}_{k=N}^M$ and
$\{b_k\}_{k=N}^M$ are sequences of non-negative real numbers. Then the inequality
\begin{equation}\label{E:disc.hardy}
\left( \sum_{k=N}^M \left(     \sum_{i =N}^k x_i b_i \right)^q a_k \right)^{\frac{1}{q}} \leq C \left(\sum_{k=N}^M x_k^p \right)^{\frac{1}{p}}
\end{equation}
holds for every sequence $\{x_k\}_{k=N}^M$ of non-negative real numbers if and only if
	
{\rm(i)} either $p \leq 1$, $p\leq q $ and
\begin{equation*}
    H_1= \sup_{N \leq k \leq M} \left\{ \left(\sum_{i =k}^{M} a_i \right)^{\frac{1}{q}} b_k\right\} < \infty,
\end{equation*}

{\rm(ii)} or $q < p \le 1$ and
\begin{equation*}
    H_2= \left( \sum_{k=N}^M a_k \left(\sum_{i =k}^{M} a_i\right)^{\frac{q}{p-q}} \sup_{N \leq i\leq k} b_i^{\frac{qp}{p-q}}  \right)^{\frac{p-q}{pq}} < \infty,
\end{equation*}

{\rm(iii)} or $1 < p $, $q < p$ and
\begin{equation*}
    H_3= \left( \sum_{k=N}^M a_k \left(\sum_{i = k}^M a_i\right)^{\frac{q}{p-q}} 	\left(\sum_{i= N}^{k} b_i^{\frac{p}{p-1}}\right)^{\frac{q(p-1)}{p-q}} \right)^{\frac{p-q}{pq}}< \infty,
\end{equation*}

{\rm(iv)} or $ 1< p \le q $ and
\begin{equation*}
    H_4=  \sup_{N\leq k \leq M} \left\{ \left(\sum_{i = k}^{M} a_i\right)^{\frac{1}{q}} \left(\sum_{i= N}^{k} b_i^{\frac{p}{p-1}}\right)^{\frac{p-1}{p}} \right\}< \infty.
\end{equation*}
Moreover, if we denote by $C$ the best constant in~\eqref{E:disc.hardy}, then
\begin{equation*}
    C \approx
        \begin{cases}
            H_1 &\text{in the case \textup{(i)},}
                \\
            H_2 &\text{in the case \textup{(ii)},}
                \\
            H_3 &\text{in the case \textup{(iii)},}
                \\
            H_4 &\text{in the case \textup{(iv)}.}
        \end{cases}
\end{equation*}
Moreover, the multiplicative constants in all the equivalences above depend only on $p,q$.
\end{thm}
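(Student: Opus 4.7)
The plan is to reduce the inequality, via the substitution $c_i := x_i b_i$, to the standard weighted discrete Hardy inequality
\[
\left(\sum_{k=N}^{M} a_k \left(\sum_{i=N}^{k} c_i\right)^{q}\right)^{\!1/q} \leq C \left(\sum_{i=N}^{M} b_i^{-p} c_i^p\right)^{\!1/p},
\]
for which the four parameter regimes are classical. The four conditions $H_1$--$H_4$ then arise from the standard Muckenhoupt--Bradley--Maz'ya--Sinnamon--Persson--Stepanov analysis, adapted from the continuous to the discrete setting.

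For necessity in cases (i) and (iv), a single-atom test sequence $x_j = \delta_{jk}$ (in the $p \leq 1$ case) or the Hölder-optimal $x_j = b_j^{1/(p-1)} \chi_{\{j \leq k\}}$ (in the $p > 1$ case) produces, after taking the supremum over $k$, the bounds $H_1 \leq C$ and $H_4 \leq C$ respectively. For the sum conditions $H_2$ and $H_3$ in cases (ii) and (iii), I would use extremal test sequences obtained by formally solving the Euler--Lagrange equation; concretely, $x_j$ proportional to $b_j \big(\sum_{i \geq j} a_i\big)^{1/(p-q)} \big(\sup_{i \leq j} b_i\big)^{q/(p-q)}$ for (ii), and the analogous expression with $\big(\sum_{i \leq j} b_i^{p'}\big)^{(p-1)/(p-q)}$ for (iii). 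Substituting into the inequality and rearranging produces $H_2$ and $H_3$ after Abel summation.

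For sufficiency, the cases $p \leq 1$ use the subadditivity $\left(\sum c_i\right)^p \leq \sum c_i^p$ combined with the embedding $\ell^p \hookrightarrow \ell^q$ of norm $1$ for $p \leq q$: case (i) follows by interchanging the order of summation and invoking $H_1 < \infty$, while case (ii) additionally requires splitting the outer summation on dyadic level sets of $\sum_{i \leq k} c_i$ and applying Theorem~\ref{T:disc.lp.emb.}. The cases $p > 1$ begin with Hölder's inequality,
\[
\sum_{i \leq k} x_i b_i \leq \left(\sum_{i \leq k} x_i^p\right)^{1/p} \left(\sum_{i \leq k} b_i^{p'}\right)^{1/p'},
\]
which in case (iv) reduces the problem directly to the $\ell^p$--$\ell^q$ embedding of Theorem~\ref{T:disc.lp.emb.} applied to the monotone sequence $z_k := \left(\sum_{i \leq k} x_i^p\right)^{1/p}$.

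The main obstacle is case (iii), $1 < p$ and $q < p$, where the crude Hölder bound leaves one with a second Hardy-type inequality to verify and simple duality is unavailable because $q/p < 1$. I would resolve this by Maz'ya's dyadic decomposition on the sequence $B_k := \sum_{i \leq k} b_i^{p'}$: partition $\{N, \dots, M\}$ into blocks on which $B_k$ doubles, estimate each block by Hölder and telescoping, and then sum the resulting block contributions using Theorem~\ref{T:disc.lp.emb.} in the form (ii). Verifying that this reassembly yields precisely $H_3$ (rather than a strictly weaker quantity), and that the cross-block errors are absorbed, is where I expect the bulk of the technical work to lie.
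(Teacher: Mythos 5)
This statement is quoted verbatim from \cite[Theorem~4.6]{GPU-JFA}; the present paper does not prove it, so there is no internal proof to compare against. I can therefore only assess your sketch on its own merits.

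Your overall plan (substitute $c_i = x_ib_i$, treat the four parameter regimes separately, use test sequences for necessity, subadditivity/H\"older plus a discretization for sufficiency) is the right family of ideas, but as written it has real gaps. For case~(iv) you claim that after the crude H\"older bound
$\sum_{i\leq k} x_i b_i \leq \big(\sum_{i\leq k} x_i^p\big)^{1/p}\big(\sum_{i\leq k} b_i^{p'}\big)^{1/p'}$
the problem ``reduces directly'' to the $\ell^p$--$\ell^q$ embedding of Theorem~\ref{T:disc.lp.emb.} applied to $z_k=\big(\sum_{i\leq k}x_i^p\big)^{1/p}$. It does not: after this bound one is left with
$\sum_k a_k B_k^{q/p'}\big(\sum_{i\leq k}x_i^p\big)^{q/p}\leq C^q\big(\sum_i x_i^p\big)^{q/p}$,
which is still a Hardy-type inequality (in the variable $y_i=x_i^p$ with exponent $q/p\geq 1$), not a pointwise two-weight embedding. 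The standard cure is either a weighted H\"older step with a cleverly chosen auxiliary factor $B_i^{\theta}$ inside the sum, or an Abel-summation argument using $y_k^q-y_{k-1}^q\lesssim y_k^{q-p}(y_k^p-y_{k-1}^p)$; neither is the embedding theorem you cite. A similar remark applies to your ``interchanging the order of summation'' step in case~(i): after using $p$-subadditivity one again faces a Hardy inequality with exponent $q/p\geq 1$, not a direct Fubini computation.

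The second gap is the necessity in cases~(ii) and~(iii). Writing ``I would use extremal test sequences obtained by formally solving the Euler--Lagrange equation'' and guessing $x_j\sim b_j\big(\sum_{i\geq j}a_i\big)^{1/(p-q)}\big(\sup_{i\leq j}b_i\big)^{q/(p-q)}$ is not a proof; you must substitute, compute both sides, and show that after Abel summation the ratio is bounded below by $H_2$ (respectively $H_3$). Moreover these sums may diverge if $N=-\infty$ or $M=\infty$, so a truncation and a limiting argument are needed. Finally, note that the hypothesis permits $b_k=0$ for some $k$, so the substitution $c_i=x_ib_i$ makes the weight $b_i^{-p}$ formally undefined where $b_i=0$; this can be repaired with the paper's convention $0/0=0$, but you should say so. Your identification of the Maz'ya-type dyadic decomposition on $B_k=\sum_{i\leq k}b_i^{p'}$ as the right tool for case~(iii) is correct, and carrying it out to yield exactly $H_3$ is indeed where the genuine work lies, as you anticipate.
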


\section{Discrete Characterization}\label{Disc.Char.}

We begin this section by observing that inequality \eqref{main-iterated} is equivalent to two other discrete inequalities, and we present the characterization in discrete form, which is noteworthy on its own.

Let us start with the discretization of inequality \eqref{main-iterated}. To this end we need the following notation, denote by $M(x_{k-1}, x_k)$ the best constant of weighted Hardy inequality, that is, \begin{equation*}\label{B(a,b)}
B(x_{k-1},x_k) := \sup_{h\in  \mathfrak{M}^+(x_{k-1},x_k)} \frac{\bigg(\int_{x_{k-1}}^{x_k} \bigg(\int_{x_{k-1}}^t h(s) ds \bigg)^q u(t) dt \bigg)^{\frac{1}{q}}}{\bigg(\int_{x_{k-1}}^{x_k} h(t)^p v(t) dt\bigg)^{\frac{1}{p}}}
\end{equation*}
and using the classical characterizations of weighted Hardy inequalities (see, \cite{KufPerSam,SinStep}), we have for $1\leq p <\infty, 0<q < \infty$
\begin{equation}\label{B-char.}
B(x_{k-1}, x_k) \approx\begin{cases}
\displaystyle \esup\limits_{t \in (x_{k-1}, x_{k})} \bigg(\int_t^{x_{k}} u \bigg)^{\frac{1}{q}} V_p(x_{k-1}, t) &\text{if} \quad p \leq q,\\
\\
\displaystyle 
\bigg(\int_{x_{k-1}}^{x_{k}} \bigg(\int_t^{x_{k}} u \bigg)^{\frac{q}{p-q}} u(t) V_p(x_{k-1}, t)^{\frac{pq}{p-q}}  dt \bigg)^{\frac{p-q}{pq}}  &\text{if} \quad q <  p.        \end{cases}
\end{equation}

Furthermore, observe that $V_p(x_{k-1}, x_k)$, $N+1 \leq k$ defined in \eqref{Vp} can be expressed as
\begin{eqnarray*} \label{Vp-def}
\sup_{g \in  \mathfrak{M}^+(x_{k-1}, x_k)} \frac{\int_{x_{k-1}}^{x_k} g(t) dt} {\bigg(\int_{x_{k-1}}^{x_k} g(t)^p v(t) dt \bigg)^{\frac{1}{p}}} = V_p(x_{k-1}, x_k).
\end{eqnarray*}

\begin{lem} \label{T:equiv.ineq.-0}
Let $1\leq   p <\infty$, $0 < q, r < \infty$ and let $u, v, w$ be weights on $(a,b)$ such that $0< \mathcal{W}(t) <\infty$ for all $t\in(a,b)$, and $\{x_k\}_{k=N}^{\infty}\subset [a,b]$ is a discretizing sequence of the function ${\mathcal W}$. Then, there exists a positive constant $C$ such that inequality \eqref{main-iterated} holds for all $f \in \mathfrak{M}^+(a,b)$ if and only if there exist positive constants $\mathcal{C}'$ and $\mathcal{C}''$ such that
\begin{equation}\label{main-iterated-1}
\bigg( \sum_{k=N+1}^{\infty}  2^{-k} \bigg( \int_{x_{k-1}}^{x_k} \bigg(\int_{x_{k-1}}^t f \bigg)^q u(t) dt \bigg)^{\frac{r}{q}}  \bigg)^{\frac{1}{r}} \leq \mathcal{C}' \bigg( \sum_{k=N+1}^{\infty} \int_{x_{k-1}}^{x_k} f^p v\bigg)^{\frac{1}{p}}
\end{equation}	
and
\begin{equation} \label{main-iterated-2}
\bigg( \sum_{k=N+1}^{\infty}  2^{-k}   \bigg( \int_a^{x_k} f \bigg)^r \bigg( \int_{x_k}^{x_{k+1}}  u \bigg)^{\frac{r}{q}}  \bigg)^{\frac{1}{r}}  \leq \mathcal{C}'' \bigg(\sum_{k=N+1}^{\infty} \int_{x_{k-1}}^{x_k} f^p v \bigg)^{\frac{1}{p}}
\end{equation}	
hold for all $f \in \mathfrak{M}^+(a,b)$. Moreover, $C \approx \mathcal{C}' + \mathcal{C}'' $.
\end{lem}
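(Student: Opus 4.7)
The strategy is to show that, after discretization along $\{x_k\}$,
\[
\mathrm{LHS}\eqref{main-iterated}^{\,r} \;\approx\; \mathrm{LHS}\eqref{main-iterated-1}^{\,r} + \mathrm{LHS}\eqref{main-iterated-2}^{\,r},
\]
while the right-hand sides coincide identically with $\int_a^b f^p v$. Once this equivalence is in hand both implications of the lemma are immediate: domination of $\mathrm{LHS}\eqref{main-iterated}$ by $C\,\|f\|_{L^p(v)}$ separately controls each summand, and conversely the sum of the two discrete LHS's controls $\mathrm{LHS}\eqref{main-iterated}$ up to a multiplicative constant depending only on $p,q,r$.

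To produce the equivalence, I would first split $\int_a^b = \sum_{k\ge N+1} \int_{x_{k-1}}^{x_k}$ in the outer integral. For $x\in(x_{k-1},x_k)$ the quantity $\int_a^x(\int_a^t f)^q u(t)\,dt$ is monotone in $x$, and $\int_{x_{k-1}}^{x_k} w = \mathcal{W}(x_{k-1}) - \mathcal{W}(x_k) \approx 2^{-k}$ by definition of the discretizing sequence, so
\[
\mathrm{LHS}\eqref{main-iterated}^{\,r} \;\approx\; \sum_{k=N+1}^{\infty} 2^{-k}\left(\int_a^{x_k}\Bigl(\int_a^t f\Bigr)^{q} u(t)\,dt\right)^{r/q}.
\]
Since $\{2^{-k}\}$ is geometrically decreasing, \eqref{sum-sum} of Lemma~\ref{L:dec-equiv-1} applied with $a_k = \int_{x_{k-1}}^{x_k}(\int_a^t f)^q u(t)\,dt$ and $\alpha = r/q$ replaces the global integral $\int_a^{x_k}$ by its local counterpart $\int_{x_{k-1}}^{x_k}$.

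Next I would separate the local-versus-global content of the innermost primitive. For $t\in(x_{k-1},x_k)$, write $\int_a^t f = \int_a^{x_{k-1}} f + \int_{x_{k-1}}^t f$ and apply $(A+B)^\gamma \approx A^\gamma + B^\gamma$ first with $\gamma = q$ and then with $\gamma = r/q$. Each term of the series then splits as
\[
\Bigl(\int_a^{x_{k-1}} f\Bigr)^{r}\Bigl(\int_{x_{k-1}}^{x_k} u\Bigr)^{r/q} + \left(\int_{x_{k-1}}^{x_k}\Bigl(\int_{x_{k-1}}^t f\Bigr)^{q} u(t)\,dt\right)^{r/q}.
\]
Summed against $2^{-k}$, the second piece is exactly $\mathrm{LHS}\eqref{main-iterated-1}^{\,r}$; the first, after the index shift $k\mapsto k+1$, reproduces $\mathrm{LHS}\eqref{main-iterated-2}^{\,r}$, noting that the $k=N$ boundary term vanishes because $\int_a^{x_N} f = 0$ (either since $x_N = a$ when $N > -\infty$, or as a limit when $N = -\infty$).

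The only delicate point is bookkeeping: one must verify that every $\approx$ has constants depending solely on $p,q,r$, and that the boundary-index shift produces precisely the summation range $k\ge N+1$ displayed in \eqref{main-iterated-2}. Both are routine, since each estimate relies only on Lemma~\ref{L:dec-equiv-1}, the two-sided bound $\mathcal{W}(x_{k-1})-\mathcal{W}(x_k)\approx 2^{-k}$, and the elementary equivalence $(A+B)^\gamma \approx A^\gamma + B^\gamma$. I do not anticipate a genuine obstacle; the lemma is essentially a careful assembly of the discretization tools of Section~\ref{Disc.Char.}.
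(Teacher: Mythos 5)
Your argument is correct and follows essentially the same route as the paper's: first localize the outer integral (your manual monotonicity-plus-$\mathcal{W}(x_k)\approx 2^{-k}$ argument is exactly the content of \eqref{int.equiv} with $\alpha=1$, which the paper invokes directly), then apply \eqref{dec-sum-sum} to pass from $\int_a^{x_k}$ to $\int_{x_{k-1}}^{x_k}$, and finally split $\int_a^t=\int_a^{x_{k-1}}+\int_{x_{k-1}}^t$ and use $(A+B)^\gamma\approx A^\gamma+B^\gamma$ twice to peel off the two inequalities, with the index shift and vanishing $k=N$ term handled exactly as the paper does. No gap; the differences are purely presentational.
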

\begin{proof}
Applying \eqref{int.equiv} with $\alpha =1$ and $h(x) = \bigg(\int_a^{x} \bigg(\int_a^t f \bigg)^q u(t) dt \bigg)^{\frac{r}{q}}$, we have that
\begin{equation*}
LHS\eqref{main-iterated} \approx \bigg( \sum_{k=N+1}^{\infty}  2^{-k} \bigg(\int_a^{x_k} \bigg(\int_a^t f \bigg)^q u(t) dt \bigg)^{\frac{r}{q}} \bigg)^{\frac{1}{r}}.
\end{equation*}
Since, $\{2^{-k}\}$ is geometrically decreasing, using \eqref{dec-sum-sum}, we obtain that
\begin{align*}
LHS\eqref{main-iterated} & \approx \bigg(  \sum_{k=N+1}^{\infty}   2^{-k} \bigg( \int_{x_{k-1}}^{x_k} \bigg(\int_a^t f\bigg)^q u(t) dt \bigg)^{\frac{r}{q}} \bigg)^{\frac{1}{r}} \\
& \approx \bigg( \sum_{k=N+1}^{\infty}    2^{-k} \bigg( \int_{x_{k-1}}^{x_k} \bigg(\int_{x_{k-1}}^t f\bigg)^q u(t) dt \bigg)^{\frac{r}{q}} \bigg)^{\frac{1}{r}} \\
&\quad + \bigg(  \sum_{k=N+2}^{\infty}    2^{-k} \bigg(\int_a^{x_{k-1}} f \bigg)^r \bigg( \int_{x_{k-1}}^{x_k}  u\bigg)^{\frac{r}{q}} \bigg)^{\frac{1}{r}}.
\end{align*}	
Then, it is clear that there exists a positive constant $C$ such that inequality \eqref{main-iterated} holds for all $f \in \mathfrak{M}^+(a,b)$ if and only if there exist positive constants $\mathcal{C}'$ and $\mathcal{C}''$ such that 
\eqref{main-iterated-1} and \eqref{main-iterated-2}
hold for all $f \in \mathfrak{M}^+(a,b)$. Moreover, $C \approx \mathcal{C}' + \mathcal{C}'' $.
\end{proof}

\begin{lem}\label{T:equiv.ineq.-1}
Let $1\leq   p <\infty$, $0 < q, r < \infty$ and let $u, v$ be weights on $(a,b)$. Assume that  $N\in {\mathbb{Z}}\cup\{-\infty\}$ and $\{x_k\}_{k=N}^{\infty}$ is a strictly increasing sequence in $[a, b]$. Then there exists a positive constant $\mathcal{C}'$ such that inequality \eqref{main-iterated-1}
holds for all $f \in \mathfrak{M}^+(a,b)$ if and only if there exists a positive constant $C'$ such that
\begin{equation}\label{Bk-inequality}
\bigg(\sum_{k=N+1}^{\infty} 2^{-k} a_k^r B(x_{k-1},x_{k})^r \bigg)^{\frac{1}{r}} \leq C'  \bigg(\sum_{k=N+1}^{\infty} a_k^p \bigg)^{\frac{1}{p}}
\end{equation}
holds for every sequence of non-negative numbers $\{a_k\}_{k=N+1}^{\infty}$. Moreover the best constants $\mathcal{C}'$ and $C'$, respectively, in \eqref{main-iterated-1} and \eqref{Bk-inequality} satisfy $\mathcal{C}' \approx C'$.
\end{lem}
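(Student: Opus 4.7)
The plan is to exploit the localized structure of inequality \eqref{main-iterated-1}: on each interval $(x_{k-1}, x_k)$ the inner Hardy operator $\int_{x_{k-1}}^{t} f$ depends only on the values of $f$ inside that same interval, so the inequality decouples into a family of independent local weighted Hardy inequalities whose best constants are precisely $B(x_{k-1},x_{k})$. Once this decoupling is made explicit, the claim reduces to a routine passage between a continuous and a discrete inequality, with $a_k$ playing the role of the local $L^p(v)$ norm of $f$ on $(x_{k-1},x_k)$.

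For the implication \eqref{main-iterated-1}$\Rightarrow$\eqref{Bk-inequality}, I would prove the contrapositive in the standard way via near-extremal test functions. Given any non-negative sequence $\{a_k\}_{k=N+1}^{\infty}$, for each $k$ with $a_k>0$ and $B(x_{k-1},x_k)<\infty$, use the definition of $B(x_{k-1},x_k)$ as a supremum to choose $f_k\in\mathfrak{M}^+(x_{k-1},x_k)$, supported in $(x_{k-1},x_k)$, normalized so that $(\int_{x_{k-1}}^{x_k} f_k^p v)^{1/p}=a_k$ and
\[
\bigg(\int_{x_{k-1}}^{x_k}\bigg(\int_{x_{k-1}}^t f_k\bigg)^q u(t)\,dt\bigg)^{1/q}\geq \tfrac{1}{2} B(x_{k-1},x_k)\,a_k.
\]
Then take $f=\sum_k f_k\chi_{(x_{k-1},x_k)}$; the disjointness of supports gives $\int_{x_{k-1}}^t f=\int_{x_{k-1}}^t f_k$ for every $t\in(x_{k-1},x_k)$, so plugging $f$ into \eqref{main-iterated-1} yields \eqref{Bk-inequality} with $C'\leq 2\mathcal C'$. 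The degenerate cases ($a_k=0$ contributes nothing; $B(x_{k-1},x_k)=\infty$ forces $\mathcal C'=\infty$ and hence the discrete inequality trivially) are handled by a standard truncation/exhaustion argument.

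For the converse \eqref{Bk-inequality}$\Rightarrow$\eqref{main-iterated-1}, I would argue directly. Given $f\in\mathfrak{M}^+(a,b)$, set $a_k:=\bigl(\int_{x_{k-1}}^{x_k} f^p v\bigr)^{1/p}$. By definition of $B(x_{k-1},x_k)$ applied to the restriction $f|_{(x_{k-1},x_k)}$,
\[
\bigg(\int_{x_{k-1}}^{x_k}\bigg(\int_{x_{k-1}}^t f\bigg)^q u(t)\,dt\bigg)^{1/q}\leq B(x_{k-1},x_k)\,a_k.
\]
Raising to the power $r$, multiplying by $2^{-k}$, summing over $k$, and applying \eqref{Bk-inequality} to the sequence $\{a_k\}$ immediately gives
\[
\bigg(\sum_{k=N+1}^{\infty} 2^{-k}\bigg(\int_{x_{k-1}}^{x_k}\bigg(\int_{x_{k-1}}^t f\bigg)^q u(t)\,dt\bigg)^{r/q}\bigg)^{1/r}\leq C'\bigg(\sum_{k=N+1}^{\infty} a_k^p\bigg)^{1/p},
\]
and the right-hand side equals $C'\bigl(\int_a^b f^p v\bigr)^{1/p}$ because the intervals $(x_{k-1},x_k)$ tile $(a,b)$ up to a null set. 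This yields $\mathcal C'\leq C'$.

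I do not anticipate any deep obstacle here; the argument is essentially a book-keeping exercise combined with the defining property of $B(x_{k-1},x_k)$ as a local Hardy constant. The only point requiring a little care is the construction of the near-extremal $f_k$ in the first direction, where one must either invoke the definition of supremum (with the $\tfrac12$ factor absorbed into the equivalence constants) or deal separately with the case when the local constant is infinite; both are routine.
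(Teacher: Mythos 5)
Your proof is correct and follows essentially the same argument as the paper: in one direction you build near-extremal test functions supported on each $(x_{k-1},x_k)$ from the defining supremum of $B(x_{k-1},x_k)$, and in the other you set $a_k=\bigl(\int_{x_{k-1}}^{x_k}f^p v\bigr)^{1/p}$ and apply the definition of $B(x_{k-1},x_k)$ locally. The only superficial difference is cosmetic: you normalize $f_k$ directly so that $\bigl(\int f_k^p v\bigr)^{1/p}=a_k$, whereas the paper normalizes auxiliary functions $h_k$ to unit $L^p(v)$-norm and then takes $f=\sum a_m h_m$; these are the same thing. One small inaccuracy worth noting: in the backward direction you assert that $\sum_{k}\int_{x_{k-1}}^{x_k}f^p v=\int_a^b f^p v$ because the intervals tile $(a,b)$, but the lemma does not assume the sequence exhausts $[a,b]$; this claim is neither needed nor always true --- what matters is simply that $\bigl(\sum_k a_k^p\bigr)^{1/p}$ coincides with RHS\eqref{main-iterated-1} by the choice of $a_k$, which is automatic.
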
	

\begin{proof}
 Assume that \eqref{main-iterated-1} holds. By the definition of $B(x_{k-1}, x_k)$, there exist non-negative measurable functions $h_k$, $N+1 \leq k$ on $(a, b)$ such that 
$$
\supp h_k \subset [x_{k-1}, x_k], \quad \int_{x_{k-1}}^{x_k} h_k^p v = 1, \quad  \bigg(\int_{x_{k-1}}^{x_k} \bigg(\int_{x_{k-1}}^t h_k \bigg)^q u(t) dt\bigg)^{\frac{1}{q}} \gtrsim B(x_{k-1}, x_k).
$$
Thus, inserting $f = \sum_{m=N+1}^{\infty}  a_m h_m$,  where $\{a_m\}_{m=N+1}^{\infty}$ is any sequence of non-negative numbers, into \eqref{main-iterated-1}, \eqref{Bk-inequality} follows. Moreover, $C' \lesssim \mathcal{C}'$.

Conversely, observe first that for each $h\in \mathfrak{M}^+(x_{k-1},x_k)$,
\begin{equation*}
    B(x_{k-1},x_k) \geq \bigg(\int_{x_{k-1}}^{x_k} \bigg(\int_{x_{k-1}}^t h(s) ds \bigg)^q u(t) dt \bigg)^{\frac{1}{q}} \bigg(\int_{x_{k-1}}^{x_k} h(t)^p v(t) dt\bigg)^{-\frac{1}{p}}.
\end{equation*}
Then \eqref{main-iterated-1} follows by, inserting  $a_k = (\int_{x_{k-1}}^{x_k} h^pv)^{\frac{1}{p}}$ in \eqref{Bk-inequality} and $\mathcal{C}' \leq C'$. Further, we have $\mathcal{C}' \approx C'$.
\end{proof}

\begin{lem}\label{T:equiv.ineq.-2}
Let $1 \leq p <\infty$, $0 < q, r < \infty$ and let $u, v$ be weights on $(a,b)$. Assume that  $N\in {\mathbb{Z}}\cup\{-\infty \}$ and $\{x_k\}_{k=N}^{\infty}$ is a strictly increasing sequence in $[a, b]$. Then there exists a positive constant $\mathcal{C}''$ such that inequality \eqref{main-iterated-2} holds for all $f \in \mathfrak{M}^+(a,b)$ if and only if there exists a positive constant $C''$ such that
\begin{equation}\label{Vp-inequality}
\bigg(\sum_{k=N+1}^{\infty} 2^{-k} \bigg(\int_{x_k}^{x_{k+1}} u \bigg)^{\frac{r}{q}} \bigg(\sum_{j=N+1}^k a_j\, V_p(x_{j-1}, x_j) \bigg)^r \, \bigg)^{\frac{1}{r}} \leq C'' \bigg(\sum_{k=N+1}^{\infty} a_k^p\bigg)^{\frac{1}{p}}
\end{equation}	
holds for every sequence of non-negative numbers $\{a_k\}_{k=N+1}^{\infty}$. Moreover the best constants $\mathcal{C}''$ and $C''$, respectively, in \eqref{main-iterated-2} and \eqref{Vp-inequality} satisfy $\mathcal{C}'' \approx C''$.
\end{lem}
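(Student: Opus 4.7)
My plan is to follow the same duality template used in the proof of Lemma~\ref{T:equiv.ineq.-1}, but with the local Hardy constants $B(x_{k-1},x_k)$ replaced by the local ``H\"older'' constants $V_p(x_{k-1},x_k)$. The key tool is the extremal characterization
\[
V_p(x_{k-1},x_k) \;=\; \sup_{g \in \mathfrak{M}^+(x_{k-1},x_k)} \frac{\int_{x_{k-1}}^{x_k} g}{\bigl(\int_{x_{k-1}}^{x_k} g^p v\bigr)^{1/p}},
\]
which was noted immediately before the statement of the lemma.

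For the direction \eqref{main-iterated-2} $\Rightarrow$ \eqref{Vp-inequality}, I would pick, for each $j \geq N+1$, a function $g_j \in \mathfrak{M}^+(a,b)$ supported in $[x_{j-1},x_j]$ satisfying $\int_{x_{j-1}}^{x_j} g_j^p v = 1$ and $\int_{x_{j-1}}^{x_j} g_j \gtrsim V_p(x_{j-1},x_j)$; for $1<p<\infty$ this is a normalized multiple of $v^{-1/(p-1)}\chi_{[x_{j-1},x_j]}$, while for $p=1$ the ess-sup definition of $V_1$ yields an $\varepsilon$-approximate extremal supported on a subset of $(x_{j-1},x_j)$ where $v$ is small. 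Given an arbitrary non-negative sequence $\{a_k\}$, inserting $f=\sum_{k=N+1}^\infty a_k g_k$ into \eqref{main-iterated-2} and using disjointness of supports gives $\int_a^b f^p v = \sum_k a_k^p$ and $\int_a^{x_k} f = \sum_{j=N+1}^k a_j \int_{x_{j-1}}^{x_j} g_j \gtrsim \sum_{j=N+1}^k a_j V_p(x_{j-1},x_j)$, which yields \eqref{Vp-inequality} with $C'' \lesssim \mathcal{C}''$.

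For the reverse implication, given $f \in \mathfrak{M}^+(a,b)$, set $a_k := \bigl(\int_{x_{k-1}}^{x_k} f^p v\bigr)^{1/p}$, so that $\sum_k a_k^p$ is exactly $\int_a^b f^p v$. H\"older's inequality on each $(x_{j-1},x_j)$ (and its trivial $p=1$ analogue $\int f \leq \esup v^{-1} \cdot \int f v$) gives $\int_{x_{j-1}}^{x_j} f \leq a_j\, V_p(x_{j-1},x_j)$, hence $\int_a^{x_k} f \leq \sum_{j=N+1}^k a_j V_p(x_{j-1},x_j)$. Substituting this estimate into \eqref{Vp-inequality} recovers \eqref{main-iterated-2} with $\mathcal{C}'' \leq C''$, and the equivalence $\mathcal{C}'' \approx C''$ follows.

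The only mildly delicate point is the case $p=1$, where the extremal $g_j$ must be produced by an $\varepsilon$-approximation rather than as an honest maximizer; degenerate cases in which $v^{-1/(p-1)}$ fails to be integrable over some $[x_{j-1},x_j]$ are handled by the standard truncation device and force the corresponding $a_j$ to vanish on both sides under the convention $0 \cdot \infty = 0$. Neither is a serious obstacle, so the proof is essentially mechanical once the extremal characterization of $V_p$ is in hand.
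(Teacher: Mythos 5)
Your proof is correct and follows essentially the same route as the paper: test \eqref{main-iterated-2} with $f=\sum a_j g_j$ built from near-extremizers of the $V_p$ quotient to obtain \eqref{Vp-inequality}, and conversely choose $a_k=\bigl(\int_{x_{k-1}}^{x_k}f^p v\bigr)^{1/p}$ and use $\int_{x_{k-1}}^{x_k}f \leq a_k V_p(x_{k-1},x_k)$ to recover \eqref{main-iterated-2}. The extra remarks on the $p=1$ $\varepsilon$-approximation and degenerate blocks are correct but are left implicit in the paper.
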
	

\begin{proof}
Suppose that  \eqref{main-iterated-2} holds for all $f \in \mathfrak{M}^+(a,b)$. 
Using \eqref{Vp-inequality}, there exist non-negative measurable functions $g_k$, $N+1 \leq k$ on $(x_{k-1}, x_k)$ such that 
$$
\supp g_k \subset [x_{k-1}, x_{k}], \quad \int_{x_{k-1}}^{x_{k}} g_k^p v = 1, \quad  \int_{x_{k-1}}^{x_{k}} g_k  \gtrsim V_p(x_{k-1}, x_k).
$$
Thus, inserting $f = \sum_{m=N+1}^{\infty}  a_m g_m$,   where $\{a_m\}_{m=N+1}^{\infty}$ is any sequence of non-negative numbers, into \eqref{main-iterated-2}, \eqref{Vp-inequality} follows. Moreover, $C'' \lesssim \mathcal{C}''$ holds. 

Conversely, taking $a_k = (\int_{x_{k-1}}^{x_k} f^pv)^{\frac{1}{p}}$ in \eqref{Vp-inequality} and using the estimate  
\[ V_p(x_{k-1}, x_k)\ge \int_{x_{k-1}}^{x_k} f(t) dt\bigg(\int_{x_{k-1}}^{x_k} f(t)^p v(t) dt \bigg)^{-\frac{1}{p}} \]
gives \eqref{main-iterated-2}. Additionally,  $\mathcal{C}'' \leq C''$ holds. Consequently  $\mathcal{C}'' \approx C''$ follows. 
\end{proof}

\begin{thm}\label{T:disc.char.(3.3)}
Let $1 \leq p < \infty$, $0 < q, r < \infty$, $-\infty < \beta<1$ and let $u, v, w$ be weights on $(a,b)$ such that $0< \mathcal{W}(t) <\infty$ for all $t\in(a,b)$, and  $\{x_k\}_{k=N+1}^{\infty}$ be the discretizing sequence of ${\mathcal W}$. Then inequality \eqref{Bk-inequality} holds for every sequence of non-negative numbers $\{a_k\}_{k=N+1}^{\infty}$ if and only if 

\rm(i) $p \le r$, $p\leq q$  and 
\begin{equation*}
\mathcal{A}_1 :=\sup_{N+1\leq  k <\infty} 2^{-\frac{k(1-\beta)}{r}}  \esup_{t \in (x_{k-1}, x_{k})} \bigg(\int_t^{x_{k}} {\mathcal W}^{\frac{\beta q}{r}}u \bigg)^{\frac{1}{q}} V_p(x_{k-1}, t)  < \infty,
\end{equation*}

\rm(ii) $r < p\leq q$ and
\begin{equation*}
\mathcal{A}_2 :=\bigg(\sum_{k=N+1}^{\infty} 2^{-k(1-\beta)\frac{p}{p-r}} \esup_{t \in (x_{k-1}, x_{k})} \bigg(\int_t^{x_{k}} {\mathcal W}^{\frac{\beta q}{r}}u\bigg)^{\frac{pr}{q(p-r)}} 
V_p(x_{k-1}, t)^{\frac{pr}{p-r}} \bigg)^{\frac{p-r}{pr}} < \infty,
\end{equation*}

\rm(iii) $q< p \le r $ and
\begin{equation*}
\mathcal{A}_3 :=\sup_{N+1 \leq k <\infty} 2^{-\frac{k(1-\beta)}{r}} \bigg(\int_{x_{k-1}}^{x_{k}} \bigg(\int_t^{x_{k}} {\mathcal W}^{\frac{\beta q}{r}} u \bigg)^{\frac{q}{p-q}} {\mathcal W}(t)^{\frac{\beta q}{r}}u(t) V_p(x_{k-1}, t)^{\frac{pq}{p-q}}  dt \bigg)^{\frac{p-q}{pq}} < \infty,
\end{equation*}

\rm(iv) $r < p$, $q < p$ and  
\begin{equation*}
\mathcal{A}_4 :=\bigg( \sum_{k=N+1}^{\infty} 2^{-k(1-\beta)\frac{p}{p-r}} \bigg( \int_{x_{k-1}}^{x_{k}} \bigg(\int_{t}^{x_{k}} {\mathcal W}^{\frac{\beta q}{r}}u \bigg)^{\frac{q}{p-q}} {\mathcal W}(t)^{\frac{\beta q}{r}}u(t)  V_p(x_{k-1}, t)^{\frac{pq}{p-q}}  dt \bigg)^{\frac{r(p-q)}{q(p-r)}} \bigg)^{\frac{p-r}{pr}} < \infty.
\end{equation*}
Moreover, the best constant $C'$ in inequality \eqref{Bk-inequality} satisfies 
\begin{equation*}
C' =\begin{cases}
\mathcal{A}_1  &\text{in the case \textup{(i)},}\\
\mathcal{A}_2  &\text{in the case \textup{(ii)},}\\
\mathcal{A}_3 &\text{in the case \textup{(iii)},}\\
\mathcal{A}_4 &\text{in the case \textup{(iv)}}.
\end{cases}
\end{equation*}
\end{thm}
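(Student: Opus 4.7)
The plan is to recognize that \eqref{Bk-inequality} is a weighted discrete $\ell^{p}\to\ell^{r}$ inequality in the sense of Theorem~\ref{T:disc.lp.emb.}, then substitute the known expression for the local Hardy constant $B(x_{k-1},x_{k})$ from \eqref{B-char.}, and finally absorb the prefactor $2^{-k/r}$ into weights of the form $\mathcal{W}^{\beta q/r}$ using the defining property of the discretizing sequence.

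Setting $v_{k}:=2^{-k/r}B(x_{k-1},x_{k})$ and $w_{k}:=1$ for $k\geq N+1$, the inequality \eqref{Bk-inequality} takes the form
\begin{equation*}
    \bigg(\sum_{k=N+1}^{\infty}(a_{k}v_{k})^{r}\bigg)^{\frac{1}{r}}\leq C'\bigg(\sum_{k=N+1}^{\infty}a_{k}^{p}\bigg)^{\frac{1}{p}},
\end{equation*}
so Theorem~\ref{T:disc.lp.emb.} yields $C'\approx\sup_{k}v_{k}$ when $p\leq r$ and $C'\approx(\sum_{k}v_{k}^{pr/(p-r)})^{(p-r)/(pr)}$ when $r<p$. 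The four items of the theorem correspond exactly to the four combinations of these alternatives with the dichotomy $p\leq q$ vs.\ $q<p$, which dictates which branch of \eqref{B-char.} is used to express $B(x_{k-1},x_{k})$.

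The core calculation in every case rests on the single fact that, since $\{x_{k}\}$ discretizes $\mathcal{W}$, one has $\mathcal{W}(t)\approx 2^{-k}$ for every $t\in(x_{k-1},x_{k})$, so that
\begin{equation*}
    \int_{t}^{x_{k}}u\approx 2^{k\beta q/r}\int_{t}^{x_{k}}\mathcal{W}^{\frac{\beta q}{r}}u,\qquad u(t)\approx 2^{k\beta q/r}\mathcal{W}(t)^{\frac{\beta q}{r}}u(t),
\end{equation*}
for $t\in(x_{k-1},x_{k})$. When $p\leq q$, the first branch of \eqref{B-char.} together with this absorption gives
\begin{equation*}
    2^{-k/r}B(x_{k-1},x_{k})\approx 2^{-k(1-\beta)/r}\esup_{t\in(x_{k-1},x_{k})}\bigg(\int_{t}^{x_{k}}\mathcal{W}^{\frac{\beta q}{r}}u\bigg)^{\frac{1}{q}}V_{p}(x_{k-1},t),
\end{equation*}
yielding $\mathcal{A}_{1}$ in case (i) directly and $\mathcal{A}_{2}$ in case (ii) after raising to the power $pr/(p-r)$ and summing. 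When $q<p$, I use the second branch of \eqref{B-char.}; here the powers of $2$ combine via $q/(p-q)+1=p/(p-q)$, producing a prefactor $2^{k\beta pq/(r(p-q))}$ inside the integral, which becomes $2^{k\beta/r}$ after the outer exponentiation by $(p-q)/(pq)$. This gives
\begin{equation*}
    2^{-k/r}B(x_{k-1},x_{k})\approx 2^{-k(1-\beta)/r}\bigg(\int_{x_{k-1}}^{x_{k}}\bigg(\int_{t}^{x_{k}}\mathcal{W}^{\frac{\beta q}{r}}u\bigg)^{\frac{q}{p-q}}\mathcal{W}(t)^{\frac{\beta q}{r}}u(t)V_{p}(x_{k-1},t)^{\frac{pq}{p-q}}dt\bigg)^{\frac{p-q}{pq}},
\end{equation*}
which, after applying Theorem~\ref{T:disc.lp.emb.} with the same sup-sum dichotomy, yields $\mathcal{A}_{3}$ in case (iii) and $\mathcal{A}_{4}$ in case (iv).

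The main obstacle is purely bookkeeping: several exponents occur in each formula and care is needed to track which powers of $2$ come from the prefactor $2^{-k/r}$, which from the absorption $u\approx 2^{k\beta q/r}\mathcal{W}^{\beta q/r}u$ inside the integrals, and which from the outer exponentiation. Beyond this, no analytic ingredients other than Theorem~\ref{T:disc.lp.emb.} and the classical local Hardy characterization \eqref{B-char.} are required.
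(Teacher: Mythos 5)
Your proposal is correct and follows the paper's own route exactly: the paper's proof of this theorem is literally the one-line statement ``the result follows by combining Theorem~\ref{T:disc.lp.emb.} for suitable parameters and weights with \eqref{B-char.} and the fact that $\mathcal{W}(t)\approx 2^{-k}$ for $t\in[x_{k-1},x_k]$,'' and you have carried out precisely this substitution, tracking the powers of $2$ correctly in all four cases.
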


\begin{proof}
The result follows easily by combining Theorem~\ref{T:disc.lp.emb.}     
for suitable parameters and weights with  \eqref{B-char.} and the fact that $\mathcal{W}(t)\approx 2^{-k}$, for every $t \in [x_{k-1}, x_k]$.
\end{proof} 
\begin{thm}\label{T:disc.char.(3.4)}
Let $1 \leq p < \infty$, $0 < q, r < \infty$, $-\infty<\beta<1$ and let $u, v, w$ be weights on $(a,b)$  such that
$0< \mathcal{W}(t) <\infty$ for all $t\in(a,b)$, and  $\{x_k\}_{k=N+1}^{\infty}$ be the discretizing sequence of ${\mathcal W}$. Then inequality \eqref{Vp-inequality} holds for every sequence of non-negative numbers $\{a_k\}_{k=N+1}^{\infty}$if and only if 

\rm(i) $p \le r$ and
\begin{equation*}\label{B1*}
\mathcal{B}_1 :=  \sup_{N+1 \leq k <\infty} \bigg(\sum_{i=k}^{\infty} 2^{-i(1-\beta)} \bigg(\int_{x_i}^{x_{i+1}} {\mathcal W}^{\frac{\beta q}{r}}u \bigg)^{\frac{r}{q}} \bigg)^{\frac{1}{r}} V_p(a, x_k) < \infty,
\end{equation*}

\rm(ii) $r < p$ and
\begin{align*}
\mathcal{B}_2 &:= \bigg( \sum_{k=N+1}^{\infty} 2^{-k(1-\beta)} \bigg(\int_{x_k}^{x_{k+1}} {\mathcal W}^{\frac{\beta q}{r}} u \bigg)^{\frac{r}{q}} \notag \\
& \hspace{2cm} \times \bigg(\sum_{i=k }^{\infty} 2^{-i(1-\beta)} \bigg(\int_{x_i}^{x_{i+1}} {\mathcal W}^{\frac{\beta q}{r}} u \bigg)^{\frac{r}{q}}  \bigg)^{\frac{r}{p-r}} V_p(a, x_k)^{\frac{pr}{p-r}} \bigg)^{\frac{p-r}{pr}}< \infty. \label{B2*}
\end{align*}
Moreover, the best constant $C'' $ in inequality \eqref{Vp-inequality} satisfies 
\begin{equation*}
C'' =\begin{cases}
\mathcal{B}_1  &\text{in the case \textup{(i)},}\\
\mathcal{B}_2  &\text{in the case \textup{(ii)}.}
        \end{cases}
\end{equation*}

\end{thm}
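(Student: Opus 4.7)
Inequality \eqref{Vp-inequality} is in the form addressed by Theorem~\ref{T:disc.hardy}: I take the theorem's $q$ to be our $r$, multiplier sequence $b_k := V_p(x_{k-1}, x_k)$, weight sequence $\widetilde a_k := 2^{-k}\bigl(\int_{x_k}^{x_{k+1}} u\bigr)^{r/q}$, and test sequence $\widetilde x_k := a_k$. Since $p \geq 1$ and $r > 0$, exactly one of the four sub-cases of Theorem~\ref{T:disc.hardy} applies: (i) if $p = 1 \leq r$; (iv) if $1 < p \leq r$; (ii) if $r < p = 1$; (iii) if $r < p$ and $1 < p$. The first two merge into our case~(i) of $\mathcal B_1$, and the last two into case~(ii) of $\mathcal B_2$. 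I will apply the theorem in each sub-case and then convert the resulting $H_1,H_2,H_3,H_4$ into $\mathcal{B}_1$ or $\mathcal{B}_2$.

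\textbf{Simplification of the $V_p$-terms.} For $p > 1$, the definition of $V_p$ telescopes along the discretizing sequence:
\[
\sum_{j=N+1}^{k} V_p(x_{j-1}, x_j)^{p/(p-1)} \;=\; \int_a^{x_k} v^{-1/(p-1)} \;=\; V_p(a, x_k)^{p/(p-1)},
\]
which converts $H_4$ into $\mathcal B_1$ and $H_3$ into $\mathcal B_2$ at $\beta=0$. For $p=1$ the analogous identity is
\[
\sup_{N+1 \leq j \leq k} V_1(x_{j-1}, x_j) \;=\; \esup_{s \in (a, x_k)} v(s)^{-1} \;=\; V_1(a, x_k),
\]
which converts $H_2$ into $\mathcal B_2$ at $\beta=0$. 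For $H_1$ (with $p=1$), the tail factor $A_k := \bigl(\sum_{i \geq k} \widetilde a_i\bigr)^{1/r}$ is non-increasing in $k$, so interchanging suprema yields
\[
\sup_k A_k V_1(x_{k-1}, x_k) \;=\; \sup_j V_1(x_{j-1}, x_j) \sup_{k \geq j} A_k \;=\; \sup_k A_k V_1(a, x_k),
\]
which matches $\mathcal B_1$ at $\beta=0$.

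\textbf{Introducing the $\beta$-scale.} Since $\mathcal{W}(t) \approx 2^{-i}$ for $t \in [x_i, x_{i+1}]$,
\[
2^{-i(1-\beta)} \bigg(\int_{x_i}^{x_{i+1}} \mathcal{W}^{\beta q/r} u\bigg)^{r/q} \;\approx\; 2^{-i(1-\beta)}\, 2^{-i\beta} \bigg(\int_{x_i}^{x_{i+1}} u\bigg)^{r/q} \;=\; 2^{-i}\bigg(\int_{x_i}^{x_{i+1}} u\bigg)^{r/q},
\]
uniformly in $i$, so each $\mathcal B_j$ is equivalent to its own $\beta=0$ instance, which yields $C'' \approx \mathcal{B}_1$ in (i) and $C'' \approx \mathcal{B}_2$ in (ii). The only real subtlety is the $p = 1$ case, in which the usual $L^{p'}$-type telescoping is unavailable; this is overcome by the interchange-of-suprema argument above for sub-case (i) and by the direct essential-supremum identity for sub-case (ii). After these steps, all remaining manipulations are direct substitutions into Theorem~\ref{T:disc.hardy} and the observation that $V_p(a,x_k)$ and the partial sums/suprema built from $V_p(x_{j-1},x_j)$ agree.
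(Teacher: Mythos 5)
Your proof is correct and takes essentially the same approach as the paper's: you apply Theorem~\ref{T:disc.hardy} with the same identifications ($q \mapsto r$, $a_k \mapsto 2^{-k}(\int_{x_k}^{x_{k+1}}u)^{r/q}$, $b_k \mapsto V_p(x_{k-1},x_k)$), split into the same four sub-cases by the position of $p$ relative to $1$ and $r$, use the telescoping identity $\sum_{i\le k} V_p(x_{i-1},x_i)^{p/(p-1)} = V_p(a,x_k)^{p/(p-1)}$ for $p>1$ and the essential-supremum identity for $p=1$, handle $H_1$ by the same interchange-of-suprema argument exploiting monotonicity of the tail, and obtain the $\beta$-scale via the same observation that $\mathcal{W}(t)\approx 2^{-i}$ on $[x_i,x_{i+1}]$.
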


\begin{proof}
We will apply Theorem~\ref{T:disc.hardy}  with suitable parameters  and weights $a_k=2^{-k}\big(\int_{x_k}^{x_{k+1}} u \big)^{\frac{r}{q}} $ and $b_k = V_p(x_{k-1},x_k)$. We need to treat the cases when $p>1$ and $p=1$, separately. Note that when $p>1$, since $\{x_k\}_{k=N+1}^{\infty}$ is the discretizing sequence of ${\mathcal W}$, we have for each $k: N+1 \leq k$ that
\begin{align*}
\sum_{i=N+1}^{\infty} V_p(x_{i-1},x_i)^{\frac{p}{p-1}} = \sum_{i=N+1}^{\infty} \int_{x_{i-1}}^{x_i} v(s)^{-\frac{1}{p-1}} ds = V_p(a,x_k)^{\frac{p-1}{p}}. 
\end{align*}
Thus, applying [Theorem~\ref{T:disc.hardy}, (iv)] when $1<p\leq r$ and [Theorem~\ref{T:disc.hardy}, (iii)] when $1<p, r<p$, the result follows. 
 
On the other hand, if $p=1$, for each $k: N+1 \leq k$,
\begin{equation*}
\sup_{N+1\leq i \leq k} V_1(x_{i-1},x_i) =  \sup_{N+1\leq i \leq k} \esup_{s\in(x_{i-1},x_i)} v(s)^{-1} =  V_1(a,x_k).  
\end{equation*}
Therefore, applying [Theorem~\ref{T:disc.hardy}, (ii)] when $r<p=1$, we have $C'' \approx \mathcal{B}_2$. 

Lastly, if $p=1\leq r$, then  applying [Theorem~\ref{T:disc.hardy}, (i)], and the fact that $\mathcal{W}(t)\approx 2^{-i}$, for every $t \in [x_{i}, x_{i+1}]$ we have
\begin{align*}
C''  &\approx \sup_{N+1 \leq k <\infty} \bigg(\sum_{i=k}^{\infty} 2^{-i} \bigg(\int_{x_i}^{x_{i+1}} u \bigg)^{\frac{r}{q}} \bigg)^{\frac{1}{r}} V_1(x_{k-1}, x_k)\\
&\approx \sup_{N+1 \leq k <\infty} \bigg(\sum_{i=k}^{\infty} 2^{-i(1-\beta)} \bigg(\int_{x_i}^{x_{i+1}} {\mathcal W}^{\frac{\beta q}{r}}u \bigg)^{\frac{r}{q}} \bigg)^{\frac{1}{r}} V_1(x_{k-1}, x_k).
\end{align*}
Finally, interchanging supremum yields that
\begin{align*}
C''&\approx \sup_{N+1 \leq k <\infty} V_1(x_{k-1}, x_k) \sup_{k\leq m<\infty} \bigg(\sum_{i=m}^{\infty} 2^{-i(1-\beta)} \bigg(\int_{x_i}^{x_{i+1}} {\mathcal W}^{\frac{\beta q}{r}} u \bigg)^{\frac{r}{q}} \bigg)^{\frac{1}{r}} \\
& = \sup_{N+1 \leq m <\infty} \bigg(\sum_{i=m}^{\infty} 2^{-i(1-\beta)} \bigg(\int_{x_i}^{x_{i+1}}{\mathcal W}^{\frac{\beta q}{r}} u \bigg)^{\frac{r}{q}} \bigg)^{\frac{1}{r}}  \sup_{N+1\leq k \leq  m} V_1(x_{k-1}, x_k) = \mathcal{B}_1.
\end{align*}

\end{proof}
We can formulate the discrete characterization of inequality \eqref{main-iterated}.

\begin{thm}\label{T:disc.char.}
Let $1 \leq p < \infty$, $0 < q, r < \infty$ and let $u, v, w$ be weights on $(a,b)$ such that $0< \mathcal{W}(t) <\infty$ for all $t\in(a,b)$, and  $\{x_k\}_{k=N+1}^{\infty}$ be the discretizing sequence of ${\mathcal W}$. Then inequality \eqref{main-iterated} holds for all $f \in \mathfrak{M}^+(a,b)$ if and only if 

\rm(i) $p \le r$, $p\leq q$ and $
\mathcal{A}_1 + \mathcal{B}_1 <\infty$,

\rm(ii) $r < p\leq q$ and
$\mathcal{A}_2 + \mathcal{B}_2<\infty$,

\rm(iii) $q< p \le r $ and $\mathcal{A}_3 + \mathcal{B}_1<\infty$,

\rm(iv) $r < p$, $q < p$ and $\mathcal{A}_4 +\mathcal{B}_2 <\infty$.

Moreover the best constant $C$ in \eqref{main-iterated} satisfies
\begin{equation*}
C =\begin{cases}
\mathcal{A}_1 +\mathcal{B}_1  &\text{in the case \textup{(i)},}\\
\mathcal{A}_2 +\mathcal{B}_2  &\text{in the case \textup{(ii)},}\\
\mathcal{A}_3+\mathcal{B}_1  &\text{in the case \textup{(iii)},}\\
\mathcal{A}_4 +\mathcal{B}_2  &\text{in the case \textup{(iv)}.}
\end{cases}
\end{equation*}
\end{thm}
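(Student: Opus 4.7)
The plan is to assemble the statement from the machinery already built. By Lemma~\ref{T:equiv.ineq.-0}, inequality \eqref{main-iterated} holds with best constant $C$ if and only if both \eqref{main-iterated-1} and \eqref{main-iterated-2} hold, with $C \approx \mathcal{C}' + \mathcal{C}''$, where $\mathcal{C}'$ and $\mathcal{C}''$ are the optimal constants in those two inequalities. Then Lemma~\ref{T:equiv.ineq.-1} translates \eqref{main-iterated-1} into the purely discrete inequality \eqref{Bk-inequality} with $\mathcal{C}' \approx C'$, and Lemma~\ref{T:equiv.ineq.-2} translates \eqref{main-iterated-2} into \eqref{Vp-inequality} with $\mathcal{C}'' \approx C''$. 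Chaining these equivalences gives $C \approx C' + C''$.

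The next step is to substitute in the explicit characterizations of $C'$ and $C''$. Theorem~\ref{T:disc.char.(3.3)} expresses $C'$ (up to equivalence) as one of the quantities $\mathcal{A}_1,\mathcal{A}_2,\mathcal{A}_3,\mathcal{A}_4$ depending on the mutual order of $p$, $q$, $r$, and Theorem~\ref{T:disc.char.(3.4)} expresses $C''$ as $\mathcal{B}_1$ when $p\le r$ and $\mathcal{B}_2$ when $r<p$. It therefore only remains to check that the four parameter regimes listed in the statement are exactly the pairings forced by taking the union of the case splits in Theorems~\ref{T:disc.char.(3.3)} and~\ref{T:disc.char.(3.4)}.

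I would verify this case-by-case as follows. In regime (i) we have $p\le r$ and $p\le q$: Theorem~\ref{T:disc.char.(3.3)}(i) yields $C'\approx \mathcal{A}_1$, while $p\le r$ triggers Theorem~\ref{T:disc.char.(3.4)}(i), giving $C''\approx\mathcal{B}_1$; hence $C\approx\mathcal{A}_1+\mathcal{B}_1$. In regime (ii), $r<p\le q$ pairs Theorem~\ref{T:disc.char.(3.3)}(ii) with Theorem~\ref{T:disc.char.(3.4)}(ii), giving $\mathcal{A}_2+\mathcal{B}_2$. In regime (iii), $q<p\le r$ pairs Theorem~\ref{T:disc.char.(3.3)}(iii) with Theorem~\ref{T:disc.char.(3.4)}(i), producing $\mathcal{A}_3+\mathcal{B}_1$. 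In regime (iv), $q<p$ and $r<p$ pair Theorem~\ref{T:disc.char.(3.3)}(iv) with Theorem~\ref{T:disc.char.(3.4)}(ii), producing $\mathcal{A}_4+\mathcal{B}_2$. These are mutually exclusive and jointly exhaustive, matching the four items of the statement.

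There is no real obstacle here beyond careful bookkeeping; the theorem is a direct synthesis of the already-proved reductions and discrete Hardy-type characterizations. The only point that deserves a brief sentence is that the $\beta$ parameter, which is not exhibited in the hypotheses of Theorem~\ref{T:disc.char.}, enters implicitly through the definitions of $\mathcal{A}_i$ and $\mathcal{B}_j$ and any choice $-\infty<\beta<1$ yields an equivalent characterization, because each of $C'$ and $C''$ is independent of $\beta$ while the respective equivalents depend on it; this freedom is precisely what will later allow Theorem~\ref{T:main} to produce a whole scale of continuous conditions from the same discrete equivalence.
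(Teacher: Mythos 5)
Your proposal is correct and follows exactly the same route the paper takes: chain Lemmas~\ref{T:equiv.ineq.-0}--\ref{T:equiv.ineq.-2} to reduce \eqref{main-iterated} to the two discrete inequalities \eqref{Bk-inequality} and \eqref{Vp-inequality} with $C\approx C'+C''$, then read off $C'$ and $C''$ from Theorems~\ref{T:disc.char.(3.3)} and~\ref{T:disc.char.(3.4)} and pair the cases. Your closing remark about $\beta$ entering only through the representation (while $C'$, $C''$ themselves are $\beta$-independent) is a correct and useful observation that the paper leaves implicit.
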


\begin{proof}
According to Lemmas~[\ref{T:equiv.ineq.-0}-\ref{T:equiv.ineq.-2}], inequality \eqref{main-iterated} holds if and only if inequalities \eqref{Bk-inequality} and \eqref{Vp-inequality} hold. Moreover, the best constant $C$ in \eqref{main-iterated} satisfies $C\approx C' + C''$, where $C'$ and $C''$ are the best constants in inequalities \eqref{Bk-inequality} and \eqref{Vp-inequality}, respectively. The result follows from the combination of Theorem~\ref{T:disc.char.(3.3)} and Theorem~\ref{T:disc.char.(3.4)}.
\end{proof}

\section{Proofs}\label{S:Proofs}

\textbf{Proof of Theorem~\ref{T:main}}

\rm(i) Let $1 \leq p \leq \min\{r,q\}$. We have from [Theorem~\ref{T:disc.char.}, (i)] that $C \approx \mathcal{A}_1 + \mathcal{B}_1$. We will prove that $C_1 \approx \mathcal{A}_1 + \mathcal{B}_1$. First, we will show that $\mathcal{A}_1 + \mathcal{B}_1 \approx A_1 + \mathcal{B}_1$, where
\begin{equation*}
A_1 := \sup_{N+1 \leq  k} 2^{-\frac{k(1-\beta)}{r}}  \esup_{t \in (a, x_{k})} \bigg(\int_t^{x_{k}} {\mathcal W}^{\frac{\beta q}{r}}u \bigg)^{\frac{1}{q}} V_p(a, t).
\end{equation*}

It is clear that $\mathcal{A}_1 \leq A_1$. On the other hand, observe that 
\begin{align*}
A_1 & = \sup_{N+1 \leq  k} 2^{-\frac{k(1-\beta)}{r}} \sup_{N+1\leq i \leq k} \esup_{t \in (x_{i-1}, x_{i})} \bigg(\int_t^{x_k} {\mathcal W}^{\frac{\beta q}{r}}u \bigg)^{\frac{1}{q}} V_p(a, t)\\
& \approx \sup_{N+1 \leq  k} 2^{-\frac{k(1-\beta)}{r}} \sup_{N+1\leq i \leq k} \esup_{t \in (x_{i-1}, x_{i})} \bigg(\int_t^{x_i} {\mathcal W}^{\frac{\beta q}{r}} u \bigg)^{\frac{1}{q}} V_p(a, t) \\
& \quad + \sup_{N+2\leq  k} 2^{-\frac{k(1-\beta)}{r}} \sup_{N+1\leq i < k} \bigg(\int_{x_i}^{x_k} {\mathcal W}^{\frac{\beta q}{r}}u \bigg)^{\frac{1}{q}} V_p(a, x_i).
\end{align*}
Then, interchanging the supremum in the first term and applying  \eqref{3-sup-equiv} with $n=N+2$, for the second term, we have that
\begin{align*}
A_1 & \approx  \sup_{N+1 \leq  k} 2^{-\frac{k(1-\beta)}{r}} \esup_{t \in (x_{k-1}, x_{k})} \bigg(\int_t^{x_k} {\mathcal W}^{\frac{\beta q}{r}} u \bigg)^{\frac{1}{q}} V_p(a, t) \\
& \hspace{2cm} + \sup_{N+2\leq  k} 2^{-\frac{k(1-\beta)}{r}} \bigg(\int_{x_{k-1}}^{x_k} {\mathcal W}^{\frac{\beta q}{r}}u \bigg)^{\frac{1}{q}} V_p(a, x_{k-1}).
\end{align*}
Note that, for any $k \geq N+2$, we have
\begin{equation}\label{V-cut}
V_p(a,t) \approx V_p(a, x_{k-1}) + V_p(x_{k-1}, t), \quad \text{for every} \quad t\in (x_{k-1}, x_k).
\end{equation}
Then, in view of \eqref{V-cut}, 
\begin{align*}
A_1 & \approx  \sup_{N+1 \leq  k} 2^{-\frac{k(1-\beta)}{r}} \esup_{t \in (x_{k-1}, x_{k})} \bigg(\int_t^{x_k} {\mathcal W}^{\frac{\beta q}{r}}u \bigg)^{\frac{1}{q}} V_p(x_{k-1}, t) \\
&\hskip+1cm + \sup_{N+2 \leq  k} 2^{-\frac{k(1-\beta)}{r}} \bigg(\int_{x_{k-1}}^{x_k} {\mathcal W}^{\frac{\beta q}{r}}u \bigg)^{\frac{1}{q}} V_p(a, x_{k-1})\\
& \lesssim \mathcal{A}_1 + \mathcal{B}_1.
\end{align*}
Then we have that $\mathcal{A}_1 + \mathcal{B}_1 \leq A_1 + \mathcal{B}_1 \lesssim \mathcal{A}_1 + \mathcal{B}_1$. 

It remains to show that $A_1 + \mathcal{B}_1 \approx C_1$. Applying \eqref{sup.equiv} with $\alpha = \frac{1-\beta}{r}$,
\begin{equation*}
A_1 \approx \esup_{x \in (a, b)} \bigg(\int_x^b w \bigg)^{\frac{1-\beta}{r}} \esup_{t\in (a,x)} \bigg(\int_t^x {\mathcal W}^{\frac{\beta q}{r}}u \bigg)^{\frac{1}{q}} V_p(a, t)
\end{equation*}
holds, and interchanging supremum gives that
\begin{align}\label{A1<C1}
A_1 &\approx \esup_{t \in (a, b)} V_p(a, t) \esup_{x\in (t,b)} \bigg(\int_x^b w \bigg)^{\frac{1-\beta}{r}}  \bigg(\int_t^x {\mathcal W}^{\frac{\beta q}{r}}u \bigg)^{\frac{1}{q}} \notag\\ 
& \leq \esup_{t \in (a, b)} V_p(a, t) \bigg(\int_t^b {\mathcal W}(s)^{-\beta }w(s) \bigg(\int_t^s {\mathcal W}^{\frac{\beta q}{r}} u \bigg)^{\frac{r}{q}} ds \bigg)^{\frac{1}{r}}= C_1.
\end{align}
On the other hand, applying \eqref{int.equiv} with $\alpha= 1-\beta$, then using \eqref{dec-sum-sum} with $n=k$, we obtain for any $k\geq N$ that
\begin{align}\label{int-wu-sum}
\int_{x_k}^b {\mathcal W(s)}^{-\beta} w(s) \bigg(\int_{x_k}^s {\mathcal W}^{\frac{\beta q}{r}}u \bigg)^{\frac{r}{q}} ds &  \approx  \sum_{i=k+1}^{\infty} 2^{-i(1-\beta)} \bigg(\int_{x_k}^{x_i} {\mathcal W}^{\frac{\beta q}{r}}u \bigg)^{\frac{r}{q}} \notag \\
&\approx \sum_{i=k}^{\infty} 2^{-i(1-\beta)} \bigg( \int_{x_i}^{x_{i+1}} {\mathcal W}^{\frac{\beta q}{r}}u \bigg)^{\frac{r}{q}}.
\end{align}
Therefore, in view of \eqref{int-wu-sum},
\begin{align}
\mathcal{B}_1 & \approx  \sup_{N+1 \leq k} \bigg(\int_{x_k}^b {\mathcal W}(s)^{-\beta}  w(s) \bigg(\int_{x_k}^s {\mathcal W}^{\frac{\beta q}{r}} u \bigg)^{\frac{r}{q}} ds  \bigg)^{\frac{1}{r}} V_p(a, x_k)  \notag \\
& \leq \sup_{N+1 \leq k} \esup_{t \in (x_{k-1}\ x_k)} \bigg(\int_t^{b} {\mathcal W}(s)^{-\beta } w(s) \bigg(\int_t^{s} {\mathcal W}^{\frac{\beta q}{r}}u \bigg)^{\frac{r}{q}} ds \bigg)^{\frac{1}{r}} V_p(a, t)  
= C_1. \label{B1*<C1}
\end{align}
Thus, combining \eqref{A1<C1} with \eqref{B1*<C1}, we have that $A_1 + \mathcal{B}_1 \lesssim C_1$. 

Conversely, using \eqref{V-cut}, we have
\begin{align*}
C_1 & \approx  \sup_{N+1 \leq k}   \bigg(\int_{x_k}^{b} {\mathcal W}(s)^{-\beta }w(s) \bigg(\int_{x_k}^s {\mathcal W}^{\frac{\beta q}{r}}u \bigg)^{\frac{r}{q}} ds \bigg)^{\frac{1}{r}}  V_p(x_{k-1}, x_k) \\
&+\sup_{N+1 \leq k} 2^{-\frac{k(1-\beta)}{r}}\esup_{t \in (x_{k-1}, x_k)}  \bigg(\int_{t}^{x_k} {\mathcal W}^{\frac{\beta q}{r}} u \bigg)^{\frac{1}{q}}  V_p(x_{k-1}, t) \\
&+\sup_{N+1 \leq k} \esup_{t \in (x_{k-1}, x_k)}  \bigg(\int_{t}^{x_k} {\mathcal W}(s)^{-\beta }w(s) \bigg(\int_{t}^s {\mathcal W}^{\frac{\beta q}{r}}u \bigg)^{\frac{r}{q}} ds \bigg)^{\frac{1}{r}}  V_p(x_{k-1}, t)\\
	& +  \sup_{N+2 \leq k} \bigg(\int_{x_{k-1}}^b {\mathcal W}(s)^{-\beta}w(s) \bigg(\int_{x_{k-1}}^s {\mathcal W}^{\frac{\beta q}{r}}u \bigg)^{\frac{r}{q}} ds \bigg)^{\frac{1}{r}}  V_p(a, x_{k-1}).
\end{align*}
Then, using \eqref{int-wu-sum}, we arrive at
\begin{align} \label{C1<A1+B1*}
C_1 & \lesssim \sup_{N+1\leq k} 2^{-\frac{k(1-\beta)}{r}} \esup_{t \in (x_{k-1}, x_k)} \bigg(\int_t^{x_k} {\mathcal W}^{\frac{\beta q}{r}}u \bigg)^{\frac{1}{q}}  V_p(a, t) \notag\\
& \quad + \sup_{N+1 \leq k} \bigg(\sum_{i=k}^{\infty} 2^{-i(1-\beta)} \bigg(\int_{x_i}^{x_{i+1}}{\mathcal W}^{\frac{\beta q}{r}}u \bigg)^{\frac{r}{q}} \bigg)^{\frac{1}{r}}  V_p(a, x_{k}) \notag \\
	& \leq A_1 + \mathcal{B}_1.
\end{align}

As a result, we arrive at the conclusion that the best constant $C$ in \eqref{main-iterated} satisfies $C \approx C_1$.

\rm(ii) Let $r < p \leq q$. Then, we have from [Theorem~\ref{T:disc.char.}, (ii)] that the best constant in \eqref{main-iterated} satisfies $C\approx \mathcal{A}_2 + \mathcal{B}_2$. We will start by showing that $\mathcal{A}_2 + \mathcal{B}_2 \approx A_2 + B_2$, where
\begin{equation*}
A_2 := \bigg(\sum_{k=N+1}^{\infty} 2^{-k(1-\beta)\frac{p}{p-r}} \esup_{t \in (a, x_{k})} \bigg(\int_t^{x_{k}} {\mathcal W}^{\frac{\beta q}{r}}u\bigg)^{\frac{pr}{q(p-r)}} V_p(a, t)^{\frac{pr}{p-r}} \bigg)^{\frac{p-r}{pr}}
\end{equation*}
and
\begin{align} \label{B2}
B_2 &:= \bigg(\sum_{k=N+1}^{\infty} 2^{-k(1-\beta)} \bigg(\int_{x_k}^{x_{k+1}} {\mathcal W}^{\frac{\beta q}{r}}u \bigg)^{\frac{r}{q}} \notag \\ &\hspace{2cm} \times  \bigg(\sum_{i=k+2}^{\infty} 2^{-i(1-\beta)} \bigg(\int_{x_i}^{x_{i+1}} {\mathcal W}^{\frac{\beta q}{r}}u \bigg)^{\frac{r}{q}}  \bigg)^{\frac{r}{p-r}} 	V_p(a, x_k)^{\frac{pr}{p-r}} \bigg)^{\frac{p-r}{pr}}.
\end{align}
We have $\mathcal{A}_2 \leq A_2$ by the definitions of $\mathcal{A}_2$ and $A_2$. On the other hand, 
\begin{align}
\mathcal{B}_2 &\approx B_2  + \bigg(\sum_{k=N+1}^{\infty} 2^{-k(1-\beta)\frac{p}{p-r}} \bigg(\int_{x_k}^{x_{k+1}} {\mathcal W}^{\frac{\beta q}{r}} u \bigg)^{\frac{pr}{q(p-r)}}  V_p(a, x_k)^{\frac{pr}{p-r}} \bigg)^{\frac{p-r}{pr}} \notag \\
& \hspace{1cm} + 
\bigg(\sum_{k=N+1}^{\infty} 2^{-k(1-\beta)\frac{p}{p-r}} \bigg(\int_{x_{k}}^{x_{k+1}} {\mathcal W}^{\frac{\beta q}{r}}u \bigg)^{\frac{r}{q}}  \bigg(\int_{x_{k+1}}^{x_{k+2}} {\mathcal W}^{\frac{\beta q}{r}}u \bigg)^{\frac{r^2}{q(p-r)}}  V_p(a, x_k)^{\frac{pr}{p-r}} \bigg)^{\frac{p-r}{pr}} \notag\\
& \lesssim B_2 + \bigg(\sum_{k=N+1}^{\infty} 2^{-k(1-\beta)\frac{p}{p-r}} \bigg(\int_{x_k}^{x_{k+2}} {\mathcal W}^{\frac{\beta q}{r}}u \bigg)^{\frac{pr}{q(p-r)}}  V_p(a, x_k)^{\frac{pr}{p-r}} \bigg)^{\frac{p-r}{pr}} \notag \\
& \lesssim B_2 + \bigg(\sum_{k=N+1}^{\infty} 2^{-k(1-\beta)\frac{p}{p-r}} \esup_{t \in (a, x_k)} \bigg(\int_{t}^{x_{k+2}} {\mathcal W}^{\frac{\beta q}{r}} u \bigg)^{\frac{pr}{q(p-r)}}  V_p(a, t)^{\frac{pr}{p-r}} \bigg)^{\frac{p-r}{pr}} \notag \\
& \lesssim B_2 + A_2 \label{B2*<A2+B2}
\end{align}
holds, hence $\mathcal{A}_2 + \mathcal{B}_2 \lesssim A_2 + B_2$. 

Next, we will show that $A_2 \lesssim \mathcal{A}_2 + \mathcal{B}_2$. Observe that,
\begin{align*}
A_2 & = \bigg(\sum_{k=N+1}^{\infty} 2^{-k(1-\beta)\frac{p}{p-r}}  \sup_{N+1 \leq i \leq k} \esup_{t \in (x_{i-1}, x_{i})} \bigg(\int_t^{x_{k}} {\mathcal W}^{\frac{\beta q}{r}} u\bigg)^{\frac{pr}{q(p-r)}} V_p(a, t)^{\frac{pr}{p-r}} \bigg)^{\frac{p-r}{pr}}  \\
& \approx \bigg(\sum_{k=N+1}^{\infty} 2^{-k(1-\beta)\frac{p}{p-r}}  \sup_{N+1 \leq i \leq k} \esup_{t \in (x_{i-1}, x_{i})} \bigg(\int_t^{x_{i}} {\mathcal W}^{\frac{\beta q}{r}}u\bigg)^{\frac{pr}{q(p-r)}} V_p(a, t)^{\frac{pr}{p-r}} \bigg)^{\frac{p-r}{pr}}  \\
& \quad + \bigg(\sum_{k=N+2}^{\infty} 2^{-k(1-\beta)\frac{p}{p-r}}  \sup_{N+1 \leq i < k} \bigg(\int_{x_i}^{x_{k}} {\mathcal W}^{\frac{\beta q}{r}}u\bigg)^{\frac{pr}{q(p-r)}} V_p(a, x_i)^{\frac{pr}{p-r}} \bigg)^{\frac{p-r}{pr}}.
\end{align*}
Applying \eqref{sum-sup} for the first term and \eqref{3-sum-equiv} for the second term, we obtain that
\begin{align*}
A_2 &  \approx \bigg(\sum_{k=N+1}^{\infty} 2^{-k(1-\beta)\frac{p}{p-r}} \esup_{t \in (x_{k-1}, x_{k})} \bigg(\int_t^{x_{k}} {\mathcal W}^{\frac{\beta q}{r}}u\bigg)^{\frac{pr}{q(p-r)}} V_p(a, t)^{\frac{pr}{p-r}} \bigg)^{\frac{p-r}{pr}}  \\
& \quad + \bigg(\sum_{k=N+2}^{\infty} 2^{-k(1-\beta)\frac{p}{p-r}}  \bigg(\int_{x_{k-1}}^{x_{k}} {\mathcal W}^{\frac{\beta q}{r}} u\bigg)^{\frac{pr}{q(p-r)}} V_p(a, x_{k-1})^{\frac{pr}{p-r}} \bigg)^{\frac{p-r}{pr}}.
\end{align*}
Using
\eqref{V-cut}
we arrive at
\begin{align*}
A_2 & \lesssim \mathcal{A}_2 + \bigg(\sum_{k=N+1}^{\infty} 2^{-k(1-\beta)\frac{p}{p-r}} \bigg(\int_{x_{k}}^{x_{k+1}} {\mathcal W}^{\frac{\beta q}{r}} u\bigg)^{\frac{pr}{q(p-r)}} V_p(a, x_{k})^{\frac{pr}{p-r}} \bigg)^{\frac{p-r}{pr}} \lesssim \mathcal{A}_2 + \mathcal{B}_2.
\end{align*}
Furthermore, it is clear from the definitions of $\mathcal{B}_2$ and $B_2$ that $B_2 \leq \mathcal{B}_2$. Then, we have $A_2 + B_2 \lesssim \mathcal{A}_2 + \mathcal{B}_2$, as well. Consequently, $C \approx A_2 + B_2$ holds.  

Next, we will prove that $A_2 + B_2 \approx C_2 + C_3$.
First of all, applying \eqref{int.equiv} with $\alpha = \frac{p(1-\beta)}{p-r} $ and
\begin{equation*}
    h(x) = \esup_{t \in (a, x)} \bigg(\int_t^{x} {\mathcal W}^{\frac{\beta q}{r}}u \bigg)^{\frac{pr}{q(p-r)}}  V_p(a, t)^{\frac{pr}{p-r}}, \quad x\in (a, b),
\end{equation*}
it is clear that
\begin{equation}\label{A2=C2}
A_2 \approx \bigg(\int_a^b \mathcal{W}(x)^{\frac{p(1-\beta)}{p-r}-1 } w(x) \esup_{t \in (a, x)} \bigg(\int_t^{x}  {\mathcal W}^{\frac{\beta q}{r}}u \bigg)^{\frac{pr}{q(p-r)}}  V_p(a, t)^{\frac{pr}{p-r}} dx \bigg)^{\frac{p-r}{pr}} = C_2.
\end{equation}

On the other hand, using \eqref{int-wu-sum},
\begin{align}
B_2 & \approx \bigg(\sum_{k=N+1}^{\infty} 2^{-k(1-\beta)}  \bigg(\int_{x_{k+2}}^b {\mathcal W}(s)^{-\beta } w(s) \bigg(\int_{x_{k+2}}^s {\mathcal W}^{\frac{\beta q}{r}} u \bigg)^{\frac{r}{q}} ds  \bigg)^{\frac{r}{p-r}} \notag\\
& 
\hspace{5cm} \times\bigg(\int_{x_k}^{x_{k+1}} {\mathcal W}^{\frac{\beta q}{r}}u \bigg)^{\frac{r}{q}}  V_p(a, x_k)^{\frac{pr}{p-r}} \bigg)^{\frac{p-r}{pr}}\notag\\
& \leq \bigg(\sum_{k=N+1}^{\infty} 2^{-k(1-\beta)}  \bigg(\int_{x_{k+2}}^b {\mathcal W}(s)^{-\beta }w(s) \bigg(\int_{x_{k+2}}^s {\mathcal W}^{\frac{\beta q}{r}}u \bigg)^{\frac{r}{q}} ds  \bigg)^{\frac{r}{p-r}} \notag\\
&\hspace{5cm} \times \esup_{t \in (a, x_{k+1})} \bigg(\int_{t}^{x_{k+1}}{\mathcal W}^{\frac{\beta q}{r}} u \bigg)^{\frac{r}{q}}  V_p(a, t)^{\frac{pr}{p-r}} \bigg)^{\frac{p-r}{pr}}\notag\\
& \approx\bigg(\sum_{k=N+1}^{\infty} \int_{x_{k+1}}^{x_{k+2}} {\mathcal W}(x)^{-\beta}w(x) dx  \bigg(\int_{x_{k+2}}^b {\mathcal W}(s)^{-\beta} w(s) \bigg(\int_{x_{k+2}}^s {\mathcal W}^{\frac{\beta q}{r}} u \bigg)^{\frac{r}{q}} ds  \bigg)^{\frac{r}{p-r}} \notag\\
& \hspace{5cm} \times \esup_{t \in (a, x_{k+1})} \bigg(\int_{t}^{x_{k+1}} {\mathcal W}^{\frac{\beta q}{r}}u \bigg)^{\frac{r}{q}}  V_p(a, t)^{\frac{pr}{p-r}} \bigg)^{\frac{p-r}{pr}}\notag\\
& \leq \bigg(\sum_{k=N+1}^{\infty} \int_{x_{k+1}}^{x_{k+2}} {\mathcal W}(x)^{-\beta}w(x)  \bigg(\int_{x}^b {\mathcal W}(s)^{-\beta}w(s) \bigg(\int_{x}^s {\mathcal W}^{\frac{\beta q}{r}}u \bigg)^{\frac{r}{q}} ds  \bigg)^{\frac{r}{p-r}} \notag\\
&\hspace{5cm} \times \esup_{t \in (a, x)} \bigg(\int_{t}^{x} {\mathcal W}^{\frac{\beta q}{r}}u \bigg)^{\frac{r}{q}}  V_p(a, t)^{\frac{pr}{p-r}} dx \bigg)^{\frac{p-r}{pr}} \notag\\
&\leq C_3. \label{B2<C3}
\end{align}
Combination of \eqref{A2=C2} and \eqref{B2<C3} yield that $A_2 + B_2 \lesssim C_2 + C_3$. 

Conversely,
\begin{align*}
C_3  &= \bigg( \sum_{k=N+1}^{\infty} \int_{x_{k-1}}^{x_k}  \bigg(\int_{x}^b {\mathcal W}(s)^{-\beta}w(s) \bigg(\int_{x}^s {\mathcal W}^{\frac{\beta q}{r}}u \bigg)^{\frac{r}{q}} ds  \bigg)^{\frac{r}{p-r}} {\mathcal W}(x)^{-\beta}w(x) \\
&\hskip+2cm \times\esup_{t \in (a, x)} \bigg(\int_{t}^{x} {\mathcal W}^{\frac{\beta q}{r}} u \bigg)^{\frac{r}{q}} V_p(a, t)^{\frac{pr}{p-r}}  dx \bigg)^{\frac{p-r}{pr}} \\
& \approx \bigg( \sum_{k=N+1}^{\infty} \int_{x_{k-1}}^{x_k}  \bigg(\int_{x}^{x_k} {\mathcal W}(s)^{-\beta} w(s) \bigg(\int_{x}^s {\mathcal W}^{\frac{\beta q}{r}} u \bigg)^{\frac{r}{q}} ds  \bigg)^{\frac{r}{p-r}} {\mathcal W}(x)^{-\beta}w(x) \\ 
&\hskip+2cm \times \esup_{t \in (a, x)} \bigg(\int_{t}^{x} {\mathcal W}^{\frac{\beta q}{r}} u \bigg)^{\frac{r}{q}} V_p(a, t)^{\frac{pr}{p-r}} dx \bigg)^{\frac{p-r}{pr}} \\
& \hspace{0.2cm} + \bigg( \sum_{k=N+1}^{\infty} \int_{x_{k-1}}^{x_k}  \bigg(\int_{x_k}^b {\mathcal W}(s)^{-\beta}w(s) \bigg(\int_{x}^s {\mathcal W}^{\frac{\beta q}{r}} u \bigg)^{\frac{r}{q}} ds  \bigg)^{\frac{r}{p-r}} {\mathcal W}(x)^{-\beta} w(x) \\
&\hskip+2cm\times \esup_{t \in (a, x)} \bigg(\int_{t}^{x} {\mathcal W}^{\frac{\beta q}{r}} u \bigg)^{\frac{r}{q}} V_p(a, t)^{\frac{pr}{p-r}}  dx \bigg)^{\frac{p-r}{pr}} \\
& \approx \bigg( \sum_{k=N+1}^{\infty} \int_{x_{k-1}}^{x_k}  \bigg(\int_{x}^{x_k} {\mathcal W}(s)^{-\beta} w(s) \bigg(\int_{x}^s {\mathcal W}^{\frac{\beta q}{r}} u \bigg)^{\frac{r}{q}} ds  \bigg)^{\frac{r}{p-r}} {\mathcal W}(x)^{-\beta}w(x) \\ 
&\hskip+2cm \times \esup_{t \in (a, x)} \bigg(\int_{t}^{x} {\mathcal W}^{\frac{\beta q}{r}} u \bigg)^{\frac{r}{q}} V_p(a, t)^{\frac{pr}{p-r}} dx \bigg)^{\frac{p-r}{pr}} \\
& \hspace{0.2cm} + \bigg( \sum_{k=N+1}^{\infty} \bigg(\int_{x_k}^b {\mathcal W}(s)^{-\beta}w(s)  ds  \bigg)^{\frac{r}{p-r}} \int_{x_{k-1}}^{x_k}  \bigg(\int_{x}^{x_k} {\mathcal W}^{\frac{\beta q}{r}} u \bigg)^{\frac{r^2}{q(p-r)}}  {\mathcal W}(x)^{-\beta} w(x) \\
&\hskip+2cm\times \esup_{t \in (a, x)} \bigg(\int_{t}^{x} {\mathcal W}^{\frac{\beta q}{r}} u \bigg)^{\frac{r}{q}} V_p(a, t)^{\frac{pr}{p-r}}  dx \bigg)^{\frac{p-r}{pr}} \\
& \hspace{0.2cm} + \bigg( \sum_{k=N+1}^{\infty} \int_{x_{k-1}}^{x_k}  \bigg(\int_{x_k}^b {\mathcal W}(s)^{-\beta}w(s) \bigg(\int_{x_k}^s {\mathcal W}^{\frac{\beta q}{r}} u \bigg)^{\frac{r}{q}} ds  \bigg)^{\frac{r}{p-r}} {\mathcal W}(x)^{-\beta} w(x) \\
&\hskip+2cm\times \esup_{t \in (a, x)} \bigg(\int_{t}^{x} {\mathcal W}^{\frac{\beta q}{r}} u \bigg)^{\frac{r}{q}} V_p(a, t)^{\frac{pr}{p-r}}  dx \bigg)^{\frac{p-r}{pr}}.
\end{align*}
Since 
\begin{equation*}
\bigg(\int_{x_k}^b {\mathcal W}(s)^{-\beta}w(s)  ds  \bigg)^{\frac{r}{p-r}} \approx 2^{-k(1-\beta)\frac{r}{p-r}},
\end{equation*}
\begin{align*}
C_3  
& \approx \bigg( \sum_{k=N+1}^{\infty} \int_{x_{k-1}}^{x_k}  \bigg(\int_{x}^{x_k} {\mathcal W}(s)^{-\beta} w(s) \bigg(\int_{x}^s {\mathcal W}^{\frac{\beta q}{r}} u \bigg)^{\frac{r}{q}} ds  \bigg)^{\frac{r}{p-r}} {\mathcal W}(x)^{-\beta} w(x)  \\
&\hskip+2cm \times  \esup_{t \in (a, x)} \bigg(\int_{t}^{x} {\mathcal W}^{\frac{\beta q}{r}} u \bigg)^{\frac{r}{q}}  V_p(a, t)^{\frac{pr}{p-r}}  dx \bigg)^{\frac{p-r}{pr}} \\
& \hspace{0.2cm} + \bigg( \sum_{k=N+1}^{\infty} 2^{-k\frac{(1-\beta)r}{p-r}} \int_{x_{k-1}}^{x_k} \bigg(\int_{x}^{x_k} {\mathcal W}^{\frac{\beta q}{r}} u \bigg)^{\frac{r^2}{q(p-r)}} \mathcal{W}(x)^{-\beta}  w(x) \\
&\hspace{2cm} \times\esup_{t \in (a, x)} \bigg(\int_{t}^{x} {\mathcal W}^{\frac{\beta q}{r}} u \bigg)^{\frac{r}{q}}  V_p(a, t)^{\frac{pr}{p-r}}  dx \bigg)^{\frac{p-r}{pr}} \\
& \hspace{0.2cm}  + \bigg( \sum_{k=N+1}^{\infty} \bigg(\int_{x_k}^b {\mathcal W}(s)^{-\beta}w(s) \bigg(\int_{x_k}^s {\mathcal W}^{\frac{\beta q}{r}} u \bigg)^{\frac{r}{q}} ds  \bigg)^{\frac{r}{p-r}}\\
&\hskip+2cm \times \int_{x_{k-1}}^{x_k}{\mathcal W}(x)^{-\beta} w(x) \esup_{t \in (a, x)} \bigg(\int_{t}^{x} {\mathcal W}^{\frac{\beta q}{r}} u \bigg)^{\frac{r}{q}}  V_p(a, t)^{\frac{pr}{p-r}}  dx \bigg)^{\frac{p-r}{pr}} \\
& =: C_{3,1} + C_{3,2} + C_{3,3}.
\end{align*}
Note that
\begin{align}\label{w-calc_1}
\int_{x_{k-1}}^{x_k}  \bigg(\int_{x}^{x_k}{\mathcal W}^{-\beta} w \bigg)^{\frac{r}{p-r}}{\mathcal W}(x)^{-\beta} w(x) dx \approx \bigg(\int_{x_{k-1}}^{x_k} \mathcal{W}^{-\beta} w\bigg)^{\frac{p}{p-r}} \approx 2^{-k(1-\beta)\frac{p}{p-r}}.
\end{align}
In view of \eqref{w-calc_1}, we get
\begin{align*}
C_{3,1} & \leq \bigg( \sum_{k=N+1}^{\infty} \int_{x_{k-1}}^{x_k}  \bigg(\int_{x}^{x_k}{\mathcal W}^{-\beta} w \bigg)^{\frac{r}{p-r}} {\mathcal W}(x)^{-\beta}w(x) \\
& \hspace{2cm} \times \esup_{t \in (a, x)} \bigg(\int_{t}^{x_k} {\mathcal W}^{\frac{\beta q}{r}} u \bigg)^{\frac{pr}{q(p-r)}}  V_p(a, t)^{\frac{pr}{p-r}} dx \bigg)^{\frac{p-r}{pr}} \\
& \leq \bigg( \sum_{k=N+1}^{\infty}  \int_{x_{k-1}}^{x_k}  \bigg(\int_{x}^{x_k}{\mathcal W}^{-\beta} w \bigg)^{\frac{r}{p-r}}{\mathcal W}(x)^{-\beta} w(x) dx \\
& \hspace{2cm} \times \esup_{t \in (a, x_k)} \bigg(\int_{t}^{x_k} {\mathcal W}^{\frac{\beta q}{r}} u \bigg)^{\frac{pr}{q(p-r)}}  V_p(a, t)^{\frac{pr}{p-r}} \bigg)^{\frac{p-r}{pr}} \\
& \approx A_2
\end{align*}
and
\begin{align*}
C_{3,2}&\leq \bigg( \sum_{k=N+1}^{\infty} 2^{-k\frac{(1-\beta)r}{p-r}} \int_{x_{k-1}}^{x_k} \mathcal{W}(x)^{-\beta} w(x) \\
& \hskip+4cm \times \esup_{t \in (a, x)} \bigg(\int_{t}^{x_k} {\mathcal W}^{\frac{\beta q}{r}} u \bigg)^{\frac{r^2}{q(p-r)}} \bigg(\int_{t}^{x}{\mathcal W}^{\frac{\beta q}{r}} u \bigg)^{\frac{r}{q}}  V_p(a, t)^{\frac{pr}{p-r}} dx \bigg)^{\frac{p-r}{pr}}\\
&\leq \bigg( \sum_{k=N+1}^{\infty} 2^{-k\frac{(1-\beta)r}{p-r}} \int_{x_{k-1}}^{x_k}\mathcal{W}(x)^{-\beta} w(x)  \esup_{t \in (a, x)} \bigg(\int_{t}^{x_k} {\mathcal W}^{\frac{\beta q}{r}} u \bigg)^{\frac{pr}{q(p-r)}}  V_p(a, t)^{\frac{pr}{p-r}} dx \bigg)^{\frac{p-r}{pr}}\\
&\leq \bigg( \sum_{k=N+1}^{\infty} 2^{-k\frac{(1-\beta)r}{p-r}} \int_{x_{k-1}}^{x_k} \mathcal{W}(x)^{-\beta} w(x) dx  \esup_{t \in (a, x_k)} \bigg(\int_{t}^{x_k} {\mathcal W}^{\frac{\beta q}{r}} u \bigg)^{\frac{pr}{q(p-r)}}  V_p(a, t)^{\frac{pr}{p-r}}  \bigg)^{\frac{p-r}{pr}}\\
&\approx \bigg( \sum_{k=N+1}^{\infty} 2^{-k\frac{(1-\beta)p}{p-r}} \  \esup_{t \in (a, x_k)} \bigg(\int_{t}^{x_k} {\mathcal W}^{\frac{\beta q}{r}} u \bigg)^{\frac{pr}{q(p-r)}}  V_p(a, t)^{\frac{pr}{p-r}}  \bigg)^{\frac{p-r}{pr}}\\
& = A_2.
\end{align*}
Furthermore, using \eqref{w-calc_1} once more,
\begin{align*}
C_{3,3} & \lesssim \bigg( \sum_{k=N+1}^{\infty} 2^{-k(1-\beta)} \bigg(\int_{x_k}^b {\mathcal W}(s)^{-\beta}w(s) \bigg(\int_{x_k}^s {\mathcal W}^{\frac{\beta q}{r}} u \bigg)^{\frac{r}{q}} ds  \bigg)^{\frac{r}{p-r}}  \\
&\hspace{4cm} \times\esup_{t \in (a, x_k)} \bigg(\int_{t}^{x_k} {\mathcal W}^{\frac{\beta q}{r}} u \bigg)^{\frac{r}{q}}  V_p(a, t)^{\frac{pr}{p-r}}  \bigg)^{\frac{p-r}{pr}}\\
& = \bigg( \sum_{k=N+1}^{\infty} 2^{-k(1-\beta)} \bigg(\int_{x_k}^b{\mathcal W}(s)^{-\beta} w(s) \bigg(\int_{x_k}^s {\mathcal W}^{\frac{\beta q}{r}} u \bigg)^{\frac{r}{q}} ds  \bigg)^{\frac{r}{p-r}} \\
& \hspace{4cm} \times  \sup_{N+1 \leq i \leq  k} \esup_{t \in (x_{i-1}, x_i)} \bigg(\int_{t}^{x_k} {\mathcal W}^{\frac{\beta q}{r}} u \bigg)^{\frac{r}{q}}  V_p(a, t)^{\frac{pr}{p-r}}  \bigg)^{\frac{p-r}{pr}}\\
& \approx \bigg( \sum_{k=N+1}^{\infty} 2^{-k(1-\beta)} \bigg(\int_{x_k}^b {\mathcal W}(s)^{-\beta}w(s) \bigg(\int_{x_k}^s {\mathcal W}^{\frac{\beta q}{r}} u \bigg)^{\frac{r}{q}} ds  \bigg)^{\frac{r}{p-r}} \\
& \hspace{4cm} \times \sup_{N+1 \leq i \leq k} \esup_{t \in (x_{i-1}, x_i)} \bigg(\int_{t}^{x_i} {\mathcal W}^{\frac{\beta q}{r}} u \bigg)^{\frac{r}{q}} V_p(a, t)^{\frac{pr}{p-r}}  \bigg)^{\frac{p-r}{pr}} \\
& + \bigg( \sum_{k=N+2}^{\infty} 2^{-k(1-\beta)} \bigg(\int_{x_k}^b {\mathcal W}(s)^{-\beta}w(s) \bigg(\int_{x_k}^s {\mathcal W}^{\frac{\beta q}{r}} u \bigg)^{\frac{r}{q}} ds  \bigg)^{\frac{r}{p-r}}  \\
& \hspace{4cm} \times \sup_{N+1 \leq i <  k} \bigg(\int_{x_i}^{x_{k}} {\mathcal W}^{\frac{\beta q}{r}} u \bigg)^{\frac{r}{q}} V_p(a, x_i)^{\frac{pr}{p-r}} \bigg)^{\frac{p-r}{pr}}  \\
& =: I + II.
\end{align*}
Since, the sequence $\{a_k\}_{k=N+1}^{\infty}$, with 
$$
a_k =: 2^{-k(1-\beta)} \bigg(\int_{x_k}^b {\mathcal W}(s)^{-\beta} w(s) \bigg(\int_{x_k}^s {\mathcal W}^{\frac{\beta q}{r}} u \bigg)^{\frac{r}{q}} ds  \bigg)^{\frac{r}{p-r}} 
$$
is geometrically decreasing, \eqref{sum-sup} yields that
\begin{align*}
I & \approx \bigg( \sum_{k=N+1}^{\infty} 2^{-k(1-\beta)}  \bigg(\int_{x_k}^b {\mathcal W}(s)^{-\beta}w(s) \bigg(\int_{x_k}^s {\mathcal W}^{\frac{\beta q}{r}} u \bigg)^{\frac{r}{q}} ds  \bigg)^{\frac{r}{p-r}} \\
& \hspace{4cm} \times\esup_{t \in (x_{k-1}, x_k)} \bigg(\int_{t}^{x_k} {\mathcal W}^{\frac{\beta q}{r}} u \bigg)^{\frac{r}{q}}  V_p(a, t)^{\frac{pr}{p-r}}  \bigg)^{\frac{p-r}{pr}}.
\end{align*}
Let  $y_k \in [x_{k-1}, x_k]$, $N+1 \leq k$ be such that
\begin{align*}
\esup_{t \in (x_{k-1}, x_k)} \bigg(\int_{t}^{x_k} {\mathcal W}^{\frac{\beta q}{r}} u \bigg)^{\frac{r}{q}}  V_p(a, t)^{\frac{pr}{p-r}}   \lesssim \bigg(\int_{y_k}^{x_k} {\mathcal W}^{\frac{\beta q}{r}} u \bigg)^{\frac{r}{q}}  V_p(a, y_k)^{\frac{pr}{p-r}}. 
\end{align*}
Then, we have
\begin{align*}
I & \lesssim \bigg( \sum_{k=N+1}^{\infty} 2^{-k(1-\beta)} \bigg(\int_{x_k}^b {\mathcal W}(s)^{-\beta}w(s) \bigg(\int_{x_k}^s {\mathcal W}^{\frac{\beta q}{r}} u \bigg)^{\frac{r}{q}} ds  \bigg)^{\frac{r}{p-r}} \\
&\hspace{4cm} \times \bigg(\int_{y_k}^{x_k} {\mathcal W}^{\frac{\beta q}{r}} u \bigg)^{\frac{r}{q}}  V_p(a, y_k)^{\frac{pr}{p-r}} \bigg)^{\frac{p-r}{pr}}.
\end{align*}
Thus, \eqref{int-wu-sum} ensures that
\begin{align*}
I & \lesssim \bigg( \sum_{k=N+1}^{\infty} 2^{-k(1-\beta)} \bigg(\sum_{i=k}^{\infty} 2^{-i(1-\beta)} \bigg( \int_{x_i}^{x_{i+1}} {\mathcal W}^{\frac{\beta q}{r}} u \bigg)^{\frac{r}{q}}  \bigg)^{\frac{r}{p-r}} \\
&\hspace{4cm} \times \bigg(\int_{y_k}^{x_k} {\mathcal W}^{\frac{\beta q}{r}} u \bigg)^{\frac{r}{q}}  V_p(a, y_k)^{\frac{pr}{p-r}} \bigg)^{\frac{p-r}{pr}}\\
& \leq
\bigg( \sum_{k=N+1}^{\infty} 2^{-k(1-\beta)} \bigg(\sum_{i=k}^{\infty} 2^{-i(1-\beta)} \bigg( \int_{y_i}^{y_{i+2}} {\mathcal W}^{\frac{\beta q}{r}} u \bigg)^{\frac{r}{q}}  \bigg)^{\frac{r}{p-r}} \\
&\hspace{4cm} \times \bigg(\int_{y_k}^{y_{k+1}} {\mathcal W}^{\frac{\beta q}{r}} u \bigg)^{\frac{r}{q}}  V_p(a, y_k)^{\frac{pr}{p-r}} \bigg)^{\frac{p-r}{pr}}.
\end{align*}
Moreover,
\begin{equation*}
2^{-k} \approx \int_{x_k}^b w \leq \int_{y_k}^b w \leq \int_{x_{k-1}}^b w \approx 2^{-(k-1)}, \quad N+1 \leq k. 
\end{equation*}
As a result, $\{y_k\}_{k=N+1}^{\infty}$ is a discretizing sequence of ${\mathcal W}$, as well. This fact together with \eqref{B2*<A2+B2} yield $I \lesssim \mathcal{B}_2 \lesssim A_2 +B_2$. 

On the other hand, applying \eqref{3-sum-equiv} with
$$
\tau_k =  2^{-k(1-\beta)} \bigg(\int_{x_k}^b{\mathcal W}(s)^{-\beta} w(s) \bigg(\int_{x_k}^s {\mathcal W}^{\frac{\beta q}{r}} u \bigg)^{\frac{r}{q}} ds  \bigg)^{\frac{r}{p-r}}, \quad \sigma_k = V_p(a, x_k)^{\frac{pr}{p-r}}, \quad \alpha=\frac{r}{q},
$$
gives
\begin{align*}
II &\approx  \bigg( \sum_{k=N+2}^{\infty} 2^{-k(1-\beta)} \bigg(\int_{x_k}^b {\mathcal W}(s)^{-\beta}w(s) \bigg(\int_{x_k}^s {\mathcal W}^{\frac{\beta q}{r}} u \bigg)^{\frac{r}{q}} ds  \bigg)^{\frac{r}{p-r}}  \\
&\hspace{4cm} \times \bigg(\int_{x_{k-1}}^{x_{k}} {\mathcal W}^{\frac{\beta q}{r}} u \bigg)^{\frac{r}{q}} V_p(a, x_{k-1})^{\frac{pr}{p-r}} \bigg)^{\frac{p-r}{pr}}.  
\end{align*}
Lastly, using \eqref{int-wu-sum} and \eqref{B2*<A2+B2}, we obtain that
\begin{align*}
II &  \approx  \bigg( \sum_{k=N+2}^{\infty} 2^{-k(1-\beta)} \bigg(\sum_{i=k}^{\infty} 2^{-i(1-\beta)} \bigg( \int_{x_i}^{x_{i+1}} {\mathcal W}^{\frac{\beta q}{r}} u \bigg)^{\frac{r}{q}} \bigg)^{\frac{r}{1-r}} \bigg(\int_{x_{k-1}}^{x_k} {\mathcal W}^{\frac{\beta q}{r}} u \bigg)^{\frac{r}{q}} V_p(a, x_{k-1})^{\frac{pr}{p-r}}  \bigg)^{\frac{p-r}{pr}}\\
&\leq A_2 + B_2.
\end{align*}
Therefore, 
\begin{equation}\label{C3<A2+B2}
C_3 \lesssim A_2 + B_2.
\end{equation}

Finally, combination of \eqref{A2=C2} and \eqref{C3<A2+B2} yield, $C_2 + C_3 \approx  A_2 + B_2$. Accordingly, the best constant $C$ in \eqref{main-iterated} satisfies $C \approx C_2 +C_3 $.

\rm(iii) Let $ q< p \le r$. According to [Theorem~\ref{T:disc.char.}, (iii)], the best constant in \eqref{main-iterated} satisfies $C\approx \mathcal{A}_3 + \mathcal{B}_1$. We will begin our proof by showing that $\mathcal{A}_3 + \mathcal{B}_1 \approx A_3 + \mathcal{B}_1$, where
\begin{equation*}
A_3 := \sup_{N+1 \leq k <\infty} 2^{-\frac{k(1-\beta)}{r}} \bigg(\int_a^{x_{k}} \bigg(\int_t^{x_{k}} {\mathcal W}^{\frac{\beta q}{r}} u \bigg)^{\frac{q}{p-q}} {\mathcal W}(t)^{\frac{\beta q}{r}} u(t) \,  V_p(a,t)^{\frac{pq}{p-q}}dt \bigg)^{\frac{p-q}{pq}}.
\end{equation*}
It is clear that $\mathcal{A}_3 \leq A_3$, the proof of this part is complete if we show that $A_3 \lesssim \mathcal{A}_3 + \mathcal{B}_1$. Assume that $\max\{\mathcal{A}_3, \mathcal{B}_1\}<\infty$. Then,
\begin{equation*}
\bigg(\int_{x_{k-1}}^{x_{k}} \bigg(\int_t^{x_{k}} {\mathcal W}^{\frac{\beta q}{r}} u \bigg)^{\frac{q}{p-q}} {\mathcal W}(t)^{\frac{\beta q}{r}} u(t) \, V_p(a,t)^{\frac{pq}{p-q}}dt \bigg)^{\frac{p-q}{pq}} <\infty, \quad k\geq N+1,
\end{equation*}
holds. Also, for each $t\in [x_{k-1}, x_k]$, $k\geq N+1$, we have
\begin{equation*}
\bigg(\int_t^{x_{k}} {\mathcal W}^{\frac{\beta q}{r}} u \bigg)^{\frac{1}{q}} V_p(a,t) \lesssim \bigg(\int_{t}^{x_{k}} \bigg(\int_s^{x_{k}} {\mathcal W}^{\frac{\beta q}{r}} u \bigg)^{\frac{q}{p-q}} {\mathcal W}(s)^{\frac{\beta q}{r}} u(s) \, V_p(a,s)^{\frac{pq}{p-q}} ds \bigg)^{\frac{p-q}{pq}}.
\end{equation*}
Therefore, we have 
\begin{equation}\label{lim-0}
\lim_{t \rightarrow x_k-}    \bigg(\int_t^{x_{k}} {\mathcal W}^{\frac{\beta q}{r}} u \bigg)^{\frac{1}{q}} V_p(a,t) = 0. 
\end{equation}
In that case, integration by parts gives
\begin{align}
\bigg(\int_a^{x_{k}} & \bigg(\int_t^{x_{k}} {\mathcal W}^{\frac{\beta q}{r}} u \bigg)^{\frac{q}{p-q}} {\mathcal W}(t)^{\frac{\beta q}{r}} u(t) \, V_p(a,t)^{\frac{pq}{p-q}}dt \bigg)^{\frac{p-q}{pq}} \notag \\
& \approx \bigg(\int_a^{x_{k}} \bigg(\int_t^{x_{k}} {\mathcal W}^{\frac{\beta q}{r}} u \bigg)^{\frac{p}{p-q}}  d\big[ V_p(a,t)^{\frac{pq}{p-q}} \big] \bigg)^{\frac{p-q}{pq}} + \lim_{t \rightarrow a+} \bigg(\int_t^{x_{k}} {\mathcal W}^{\frac{\beta q}{r}} u \bigg)^{\frac{1}{q}}  V_p(a,t)\notag \\
& \approx \bigg(\sum_{i=N+1}^k \int_{x_{i-1}}^{x_{i}} \bigg(\int_t^{x_{i}} {\mathcal W}^{\frac{\beta q}{r}} u \bigg)^{\frac{p}{p-q}}  d\big[V_p(a,t)^{\frac{pq}{p-q}}\big]   \bigg)^{\frac{p-q}{pq}} \notag\\
& \quad +  \bigg(\sum_{i=N+1}^{k-1} \bigg(\int_{x_{i}}^{x_k} {\mathcal W}^{\frac{\beta q}{r}} u \bigg)^{\frac{p}{p-q}} \int_{x_{i-1}}^{x_{i}} d\big[V_p(a,t)^{\frac{pq}{p-q}}\big]   \bigg)^{\frac{p-q}{pq}} \notag
\\
& \qquad +  \lim_{t \rightarrow a+} \bigg(\int_t^{x_{k}} {\mathcal W}^{\frac{\beta q}{r}} u \bigg)^{\frac{1}{q}}  V_p(a,t).\notag
\end{align}
Moreover,  Minkowski's inequality with $\frac{p}{p-q} > 1$ yields that 
\begin{align*}
\bigg(\sum_{i=N+1}^{k-1} & \bigg(\int_{x_{i}}^{x_k} {\mathcal W}^{\frac{\beta q}{r}} u \bigg)^{\frac{p}{p-q}} \int_{x_{i-1}}^{x_{i}} d\big[V_p(a,t)^{\frac{pq}{p-q}}\big]   \bigg)^{\frac{p-q}{pq}} \\
& = \bigg(\sum_{i=N+1}^{k-1} \bigg(\sum_{j=i}^{k-1} \int_{x_{j}}^{x_{j+1}} {\mathcal W}^{\frac{\beta q}{r}} u \bigg)^{\frac{p}{p-q}} \int_{x_{i-1}}^{x_{i}} d\big[V_p(a,t)^{\frac{pq}{p-q}}\big]   \bigg)^{\frac{p-q}{pq}}\\
& \leq  \bigg(\sum_{j=N+1}^{k-1} \bigg(\int_{x_{j}}^{x_{j+1}} {\mathcal W}^{\frac{\beta q}{r}} u \bigg) \bigg(\sum_{i=N+1}^{j} \int_{x_{i-1}}^{x_i}  d\big[V_p(a,t)^{\frac{pq}{p-q}}\big]   \bigg)^{\frac{p-q}{p}} \bigg)^{\frac{1}{q}}\\
& =  \bigg(\sum_{j=N+1}^{k-1} \bigg(\int_{x_{j}}^{x_{j+1}} {\mathcal W}^{\frac{\beta q}{r}} u \bigg) \bigg( \int_{a}^{x_{j}}  d\big[V_p(a,t)^{\frac{pq}{p-q}}\big]   \bigg)^{\frac{p-q}{p}} \bigg)^{\frac{1}{q}}.
\end{align*}
Then, we arrive at
\begin{align}\label{IBP-estimate}
\bigg(\int_a^{x_{k}} & \bigg(\int_t^{x_{k}} {\mathcal W}^{\frac{\beta q}{r}} u \bigg)^{\frac{q}{p-q}} {\mathcal W}(t)^{\frac{\beta q}{r}} u(t) \, V_p(a,t)^{\frac{pq}{p-q}}dt \bigg)^{\frac{p-q}{pq}} \notag \\
& \lesssim \bigg(\sum_{i=N+1}^k \int_{x_{i-1}}^{x_{i}} \bigg(\int_t^{x_{i}} {\mathcal W}^{\frac{\beta q}{r}} u \bigg)^{\frac{p}{p-q}}  d\big[V_p(a,t)^{\frac{pq}{p-q}}\big]   \bigg)^{\frac{p-q}{pq}} \notag\\
& \quad +  \bigg(\sum_{j=N+1}^{k-1} \bigg(\int_{x_{j}}^{x_{j+1}} {\mathcal W}^{\frac{\beta q}{r}} u \bigg) \bigg(\int_{a}^{x_j}  d\big[V_p(a,t)^{\frac{pq}{p-q}}\big]   \bigg)^{\frac{p-q}{p}} \bigg)^{\frac{1}{q}} \notag\\
& \qquad +  \lim_{t \rightarrow a+} \bigg(\int_t^{x_{k}} {\mathcal W}^{\frac{\beta q}{r}} u \bigg)^{\frac{1}{q}}  V_p(a,t).
\end{align}
Now, we are in a position to find the upper estimate for $A_3$. Using  \eqref{IBP-estimate}, we have that
\begin{align*}
A_3 & \lesssim \sup_{N+1 \leq k <\infty } 2^{-\frac{k(1-\beta)}{r}} \bigg(\sum_{i=N+1}^k \int_{x_{i-1}}^{x_{i}} \bigg(\int_t^{x_{i}} {\mathcal W}^{\frac{\beta q}{r}} u \bigg)^{\frac{p}{p-q}}  d\big[V_p(a,t)^{\frac{pq}{p-q}}\big]   \bigg)^{\frac{p-q}{pq}}\\
& \quad + \sup_{N+2 \leq k <\infty} 2^{-\frac{k(1-\beta)}{r}}  \bigg(\sum_{j=N+1}^{k-1} \bigg(\int_{x_{j}}^{x_{j+1}} {\mathcal W}^{\frac{\beta q}{r}} u \bigg) \bigg(\int_{a}^{x_j}  d\big[V_p(a,t)^{\frac{pq}{p-q}}\big]   \bigg)^{\frac{p-q}{p}} \bigg)^{\frac{1}{q}} \notag\\
& \qquad +  \sup_{N+1 \leq k <\infty} 2^{-\frac{k(1-\beta)}{r}} \lim_{t \rightarrow a+} \bigg(\int_t^{x_{k}} {\mathcal W}^{\frac{\beta q}{r}} u \bigg)^{\frac{1}{q}}  V_p(a,t). \end{align*}
Further, \eqref{sup-sum} yields, 
\begin{align*}
A_3 & \lesssim 
\sup_{N+1 \leq k <\infty} 2^{-\frac{k(1-\beta)}{r}} \bigg( \int_{x_{k-1}}^{x_{k}} \bigg(\int_t^{x_{k}} {\mathcal W}^{\frac{\beta q}{r}} u \bigg)^{\frac{p}{p-q}}  d\big[V_p(a,t)^{\frac{pq}{p-q}}\big]   \bigg)^{\frac{p-q}{pq}} \notag\\
& \quad + \sup_{N+1 \leq k <\infty} 2^{-\frac{k(1-\beta)}{r}} \bigg(\int_{x_{k}}^{x_{k+1}} {\mathcal W}^{\frac{\beta q}{r}} u \bigg)^{\frac{1}{q}} \bigg( \int_{a}^{x_{k}}  d\big[V_p(a,t)^{\frac{pq}{p-q}}\big]\bigg)^{\frac{p-q}{pq}} \notag\\
& \qquad +  \sup_{N+1 \leq k <\infty } 2^{-\frac{k(1-\beta)}{r}} \lim_{t \rightarrow a+} \bigg(\int_t^{x_{k}} {\mathcal W}^{\frac{\beta q}{r}} u \bigg)^{\frac{1}{q}}  V_p(a,t)\\
& =: A_{3,1} + A_{3,2} + A_{3,3}.
\end{align*}
Integrating by parts again, we have that 
\begin{align*}
A_{3,1} & \lesssim \sup_{N+1 \leq k <\infty} 2^{-\frac{k(1-\beta)}{r}} \bigg(\int_{x_{k-1}}^{x_{k}} \bigg(\int_t^{x_{k}} {\mathcal W}^{\frac{\beta q}{r}} u \bigg)^{\frac{q}{p-q}} {\mathcal W}(t)^{\frac{\beta q}{r}} u(t)  V_p(a,t)^{\frac{pq}{p-q}} dt    \bigg)^{\frac{p-q}{pq}}. 
\end{align*}
Thus, \eqref{V-cut} gives,
\begin{align}\label{A_31<A3*+B1*}
A_{3,1} & \lesssim \sup_{N+1 \leq k <\infty} 2^{-\frac{k(1-\beta)}{r}} \bigg(\int_{x_{k-1}}^{x_{k}} \bigg(\int_t^{x_{k}} {\mathcal W}^{\frac{\beta q}{r}} u \bigg)^{\frac{q}{p-q}} {\mathcal W}(t)^{\frac{\beta q}{r}} u(t)  V_p(x_{k-1},t)^{\frac{pq}{p-q}} dt  \bigg)^{\frac{p-q}{pq}} \notag\\
& \quad + \sup_{N+2 \leq k <\infty} 2^{-\frac{k(1-\beta)}{r}} \bigg(\int_{x_{k-1}}^{x_{k}} {\mathcal W}^{\frac{\beta q}{r}} u \bigg)^{\frac{1}{q}}  V_p(a,x_{k-1}) \notag\\
& \lesssim \mathcal{A}_3 + \mathcal{B}_1. 
\end{align}
Additionally,
\begin{align}\label{A_32<A3*+B1*}
A_{3,2} \lesssim \sup_{N+1 \leq k <\infty} 2^{-\frac{k(1-\beta)}{r}} \bigg(\int_{x_{k}}^{x_{k+1}} {\mathcal W}^{\frac{\beta q}{r}} u \bigg)^{\frac{1}{q}} V_p(a,x_k)  \leq \mathcal{B}_1.
\end{align}

Lastly, we will find a suitable upper estimate for $A_{3,3}$. To this end, we will treat the cases $N= -\infty$ and $N > -\infty$, separately. Observe that, if $N=-\infty$, since $x_i \rightarrow a$ if $i\rightarrow -\infty$, we have for any $k$,
\begin{align}\label{lim<sup-1}
\lim_{t \rightarrow a+} \bigg(\int_t^{x_{k}} {\mathcal W}^{\frac{\beta q}{r}} u \bigg)^{\frac{1}{q}}  V_p(a,t) & = \lim_{i \rightarrow -\infty} \bigg(\int_{x_i}^{x_{k}} {\mathcal W}^{\frac{\beta q}{r}} u \bigg)^{\frac{1}{q}}  V_p(a,x_i) \notag \\
& \leq \sup_{i < k}  \bigg(\int_{x_i}^{x_{k}} {\mathcal W}^{\frac{\beta q}{r}} u \bigg)^{\frac{1}{q}} V_p(a,x_i).    
\end{align}
Then, then using \eqref{lim<sup-1} together with \eqref{3-sup-equiv}, we get
\begin{align*}
A_{3,3} & \lesssim  \sup_{k\in \mathbb{Z} } 2^{-\frac{k(1-\beta)}{r}} \sup_{i < k}  \bigg(\int_{x_i}^{x_{k}} {\mathcal W}^{\frac{\beta q}{r}} u \bigg)^{\frac{1}{q}}    V_p(a,x_i) \\
&\approx \sup_{k \in \mathbb{Z}} 2^{-\frac{k(1-\beta)}{r}} \bigg(\int_{x_{k-1}}^{x_{k}} {\mathcal W}^{\frac{\beta q}{r}} u \bigg)^{\frac{1}{q}} V_p(a,x_{k-1}) \lesssim \mathcal{B}_1. 
\end{align*}
On the other hand, if $N > -\infty$
\begin{align} \label{lim<sup-2}
 \lim_{t \rightarrow a+} \bigg(\int_t^{x_{k}} {\mathcal W}^{\frac{\beta q}{r}} u \bigg)^{\frac{1}{q}}  V_p(a,t) &\leq \esup_{t \in (a, x_{N+1})} \bigg(\int_t^{x_k} {\mathcal W}^{\frac{\beta q}{r}} u \bigg)^{\frac{1}{q}}  V_p(a,t) \notag\\
 &\approx \esup_{t \in (a, x_{N+1})} \bigg(\int_t^{x_{N+1}} {\mathcal W}^{\frac{\beta q}{r}} u \bigg)^{\frac{1}{q}}  V_p(a,t)\notag \\
 & \qquad +  \bigg(\int_{x_{N+1}}^{x_{k}} {\mathcal W}^{\frac{\beta q}{r}} u \bigg)^{\frac{1}{q}}  V_p(a,x_{N+1}).
\end{align}
Additionally, it is easy to see that
\begin{align}
\esup_{\tau \in (x, y)} \bigg(\int_{\tau}^y {\mathcal W}^{\frac{\beta q}{r}} u \bigg)^{\frac{1}{q}} V_p(x,\tau) \leq \bigg(\int_x^y \bigg(\int_t^y {\mathcal W}^{\frac{\beta q}{r}} u \bigg)^{\frac{q}{p-q}} {\mathcal W}(t)^{\frac{\beta q}{r}} u(t) V_p(x,t)^{\frac{pq}{p-q}} dt  \bigg)^{\frac{p-q}{pq}}. \label{sup-int-estimate}
\end{align}
First, using \eqref{lim<sup-2}, we get
\begin{align*}
A_{3,3} & \lesssim  \sup_{N+1 \leq k <\infty } 2^{-\frac{k(1-\beta)}{r}} \esup_{t \in (a, x_{N+1})} \bigg(\int_t^{x_{N+1}} {\mathcal W}^{\frac{\beta q}{r}} u \bigg)^{\frac{1}{q}}  V_p(a,t) \\
& \hspace{2cm} + \sup_{N+2 \leq k <\infty } 2^{-\frac{k(1-\beta)}{r}} \sup_{N+1 \leq i < k }\bigg(\int_{x_{i}}^{x_{k}} {\mathcal W}^{\frac{\beta q}{r}} u \bigg)^{\frac{1}{q}}  V_p(a,x_{i}).
\end{align*}
Then, using \eqref{sup-int-estimate} for the first term and applying \eqref{3-sup-equiv} for the second term, we get
\begin{align*}
A_{3,3} & \lesssim  \sup_{N+1 \leq k <\infty } 2^{-\frac{k(1-\beta)}{r}} \bigg(\int_{x_{k-1}}^{x_k} \bigg(\int_t^{x_k} {\mathcal W}^{\frac{\beta q}{r}} u \bigg)^{\frac{q}{p-q}} {\mathcal W}(t)^{\frac{\beta q}{r}} u(t) \,  V_p(a,t)^{\frac{pq}{p-q}}dt \bigg)^{\frac{p-q}{pq}} \\
& \hspace{2cm} + \sup_{N+2 \leq k <\infty } 2^{-\frac{k(1-\beta)}{r}} \bigg(\int_{x_{k-1}}^{x_k} {\mathcal W}^{\frac{\beta q}{r}} u \bigg)^{\frac{1}{q}} V_p(a, x_{k-1}).
\end{align*}
Finally, using \eqref{V-cut}, we arrive at
\begin{align*}
A_{3,3}  \lesssim  \mathcal{A}_3 + \sup_{N+2 \leq k <\infty } 2^{-\frac{k(1-\beta)}{r}} \bigg(\int_{x_{k-1}}^{x_k} {\mathcal W}^{\frac{\beta q}{r}} u \bigg)^{\frac{1}{q}} V_p(a, x_{k-1}) \lesssim \mathcal{A}_3 + \mathcal{B}_1.
\end{align*}
Consequently, we have for any $N \in \mathbb{Z}\cup \{-\infty\}$, $A_{3,3} \lesssim \mathcal{A}_3 + \mathcal{B}_1$. Combining the last estimate with \eqref{A_31<A3*+B1*} and \eqref{A_32<A3*+B1*}, we arrive at $A_3 \lesssim \mathcal{A}_3 + \mathcal{B}_1$, hence, $C \approx A_3 + \mathcal{B}_1$.

Let us now continue the proof by showing $A_3 + \mathcal{B}_1\approx C_1+ C_4$. 

To this end, taking $\alpha = \frac{1-\beta}{r}$ and 
\begin{equation*}
h(x) = \bigg(\int_a^{x} \bigg(\int_t^x {\mathcal W}^{\frac{\beta q}{r}} u \bigg)^{\frac{q}{p-q}} {\mathcal W}(t)^{\frac{\beta q}{r}} u(t) V_p(a,t)^{\frac{pq}{p-q}}dt \bigg)^{\frac{p-q}{pq}}, \quad x \in (a, b)
\end{equation*}
in \eqref{sup.equiv}, we have that $A_3 \approx C_4$. Moreover, we have already shown in \eqref{B1*<C1} and \eqref{C1<A1+B1*} that $\mathcal{B}_1 \lesssim C_1 \lesssim A_1 + \mathcal{B}_1$. Furthermore, \eqref{sup-int-estimate} yields that $A_1 \lesssim A_3$. Consequently, we have $A_3 + \mathcal{B}_1 \lesssim C_4 + C_1 \lesssim A_3 + \mathcal{B}_1$, which is the desired estimate. 

\rm(iv) Let $\max\{r,q \}< p$. Then, using [Theorem~\ref{T:disc.char.}, (iv)], we have that $C \approx \mathcal{B}_2 + \mathcal{A}_4$. First of all, we will show that $\mathcal{B}_2 + \mathcal{A}_4 \approx B_2 + A_4$, where  
\begin{equation*}
A_4 := \bigg( \sum_{k=N+1}^{\infty} 2^{-k\frac{p(1-\beta)}{p-r}} \bigg( \int_{a}^{x_{k}} \bigg(\int_{t}^{x_{k}} {\mathcal W}^{\frac{\beta q}{r}} u \bigg)^{\frac{q}{p-q}} {\mathcal W}(t)^{\frac{\beta q}{r}} u(t) V_p(a, t)^{\frac{pq}{p-q}} dt  \bigg)^{\frac{r(p-q)}{q(p-r)}}  \bigg)^{\frac{p-r}{pr}},
\end{equation*} 
and $B_2$ is defined in \eqref{B2}.

It is clear that $\mathcal{A}_4 \leq A_4$. We have already shown in \eqref{B2*<A2+B2} that $\mathcal{B}_2 \lesssim A_2 + B_2$.  Moreover, analogously as in the previous proof, using \eqref{sup-int-estimate}, one can easily see that $A_2 \lesssim A_4$. Thus, $\mathcal{B}_2 + \mathcal{A}_4 \lesssim B_2 + A_4$ follows.  

It remains to prove that $B_2 + A_4 \lesssim \mathcal{B}_2 + \mathcal{A}_4$. Assume that $\max\{\mathcal{A}_4, \mathcal{B}_2\}<\infty$. Then, using the same steps as in the previous case, we can see that \eqref{lim-0} holds; therefore, \eqref{IBP-estimate} is true in this case, as well. 

Applying \eqref{IBP-estimate} combined with \eqref{dec-sum-sum}, we obtain that
\begin{align*}
A_4 & \lesssim  \bigg( \sum_{k=N+1}^{\infty} 2^{-k\frac{p(1-\beta)}{p-r}} \bigg( \int_{x_{k-1}}^{x_{k}} \bigg(\int_{t}^{x_{k}} {\mathcal W}^{\frac{\beta q}{r}} u \bigg)^{\frac{p}{p-q}} d\bigg[ V_p(a, t)^{\frac{pq}{p-q}} \bigg]   \bigg)^{\frac{r(p-q)}{q(p-r)}}  \bigg)^{\frac{p-r}{pr}} \\
& \quad +  \bigg( \sum_{k=N+2}^{\infty} 2^{-k\frac{p(1-\beta)}{p-r}} \bigg(\int_{x_{k-1}}^{x_{k}} {\mathcal W}^{\frac{\beta q}{r}} u \bigg)^{\frac{pr}{q(p-r)}} \bigg( \int_{a}^{x_{k-1}} d\bigg[ V_p(a, t)^{\frac{pq}{p-q}} \bigg]  \bigg)^{\frac{r(p-q)}{q(p-r)}}  \bigg)^{\frac{p-r}{pr}}\\
& \qquad  + \bigg( \sum_{k=N+1}^{\infty} 2^{-k\frac{p(1-\beta)}{p-r}}  \bigg[\lim_{t \rightarrow a+} \bigg(\int_{t}^{x_{k}} {\mathcal W}^{\frac{\beta q}{r}} u \bigg)^{\frac{1}{q}} V_p(a, t)\bigg]^{\frac{pr}{p-r}} \bigg)^{\frac{p-r}{pr}} \\
& =: A_{4,1} + A_{4,2} + A_{4,3}
\end{align*}
holds.

As in the proof of the previous case, using integration by parts in combination with \eqref{V-cut}, we have that
\begin{align}
A_{4,1} &   \lesssim \bigg( \sum_{k=N+1}^{\infty} 2^{-k\frac{p(1-\beta)}{p-r}} \bigg( \int_{x_{k-1}}^{x_k} \bigg(\int_{t}^{x_k} {\mathcal W}^{\frac{\beta q}{r}} u \bigg)^{\frac{q}{p-q}} {\mathcal W}(t)^{\frac{\beta q}{r}} u(t) V_p(a,t)^{\frac{pq}{p-q}} dt  \bigg)^{\frac{r(p-q)}{q(p-r)}}  \bigg)^{\frac{p-r}{pr}} \notag\\
& \approx \mathcal{A}_4 + \bigg( \sum_{k=N+2}^{\infty} 2^{-k\frac{p(1-\beta)}{p-r}} \bigg( \int_{x_{k-1}}^{x_k}  {\mathcal W}^{\frac{\beta q}{r}} u  \bigg)^{\frac{pr}{q(p-r)}} V_p(a, x_{k-1})^{\frac{pr}{p-r}}   \bigg)^{\frac{p-r}{pr}} \notag \\
& \approx \mathcal{A}_4 + \bigg( \sum_{k=N+1}^{\infty} 2^{-k(1-\beta)} \bigg( \int_{x_{k}}^{x_{k+1}}  {\mathcal W}^{\frac{\beta q}{r}} u  \bigg)^{\frac{r}{q}}  2^{-k\frac{r(1-\beta)}{p-r}} \bigg( \int_{x_{k}}^{x_{k+1}}  {\mathcal W}^{\frac{\beta q}{r}} u  \bigg)^{\frac{r^2}{q(p-r)}} V_p(a, x_{k})^{\frac{pr}{p-r}}   \bigg)^{\frac{p-r}{pr}} \notag \\
& \lesssim \mathcal{A}_4 + \mathcal{B}_2.\label{A_41<A4+B2}
\end{align}
On the other hand, it is clear that
\begin{align}
A_{4,2} \lesssim \bigg( \sum_{k=N+2}^{\infty} 2^{-k\frac{p(1-\beta)}{p-r}} \bigg( \int_{x_{k-1}}^{x_k}  {\mathcal W}^{\frac{\beta q}{r}} u  \bigg)^{\frac{pr}{q(p-r)}} V_p(a, x_{k-1})^{\frac{pr}{p-r}}   \bigg)^{\frac{p-r}{pr}}
\lesssim \mathcal{B}_2. \label{A42<A4+B2}
\end{align}
Furthermore, if $N = -\infty$, using \eqref{lim<sup-1}, and then applying \eqref{3-sum-equiv}, we get 
\begin{align*}
A_{4,3} & \lesssim  \bigg( \sum_{k=-\infty}^{\infty} 2^{-k\frac{p(1-\beta)}{p-r}} \sup_{ i < k}  \bigg(\int_{x_{i}}^{x_k} {\mathcal W}^{\frac{\beta q}{r}} u \bigg)^{\frac{pr}{q(p-r)}}  V_p(a,x_i)^{\frac{pr}{p-r}} \bigg)^{\frac{p-r}{pr}}\\
& \approx 
\bigg( \sum_{k=-\infty}^{\infty} 2^{-k\frac{p(1-\beta)}{p-r}} \bigg(\int_{x_{k-1}}^{x_k} {\mathcal W}^{\frac{\beta q}{r}} u \bigg)^{\frac{pr}{q(p-r)}}  V_p(a,x_{k-1})^{\frac{pr}{p-r}} \bigg)^{\frac{p-r}{pr}}\lesssim \mathcal{B}_2.
\end{align*}
If $N >-\infty$, \eqref{lim<sup-2} together with \eqref{sup-int-estimate} yields, 
\begin{align*}
A_{4,3} & \lesssim \bigg( \sum_{k=N+1}^{\infty} 2^{-k\frac{p(1-\beta)}{p-r}} \bigg(\int_{x_{k-1}}^{x_k} \bigg(\int_t^{x_k} {\mathcal W}^{\frac{\beta q}{r}} u \bigg)^{\frac{q}{p-q}} {\mathcal W}(t)^{\frac{\beta q}{r}} u(t) \,  V_p(a,t)^{\frac{pq}{p-q}}dt \bigg)^{\frac{r(p-q)}{q(p-r)}} \bigg)^{\frac{p-r}{pr}} \\
& + \bigg( \sum_{k=N+2}^{\infty} 2^{-k\frac{p(1-\beta)}{p-r}}  \sup_{N+1 \leq i < k }\bigg(\int_{x_{i}}^{x_{k}} {\mathcal W}^{\frac{\beta q}{r}} u \bigg)^{\frac{pr}{q(p-r)}}  V_p(a,x_{i})^{\frac{pr}{p-r}} \bigg)^{\frac{p-r}{pr}}.
\end{align*}
Applying \eqref{V-cut} to the first term and \eqref{3-sum-equiv} to the second term, we have
\begin{align*}
A_{4,3} & \lesssim   \mathcal{A}_4 + \bigg( \sum_{k=N+2}^{\infty} 2^{-k\frac{p(1-\beta)}{p-r}} \bigg(\int_{x_{k-1}}^{x_k} {\mathcal W}^{\frac{\beta q}{r}} u \bigg)^{\frac{pr}{q(p-r)}} V_p(a,x_{k-1})^{\frac{pr}{p-r}} \bigg)^{\frac{p-r}{pr}}\lesssim \mathcal{A}_4 + \mathcal{B}_2.
\end{align*}
Thus, for any $N \in \mathbb{Z}\cup \{-\infty\}$, we arrive at $A_{4,3} \lesssim \mathcal{A}_4 + \mathcal{B}_2$. This fact, combined with \eqref{A_41<A4+B2} and \eqref{A42<A4+B2} yields $A_4 \lesssim \mathcal{A}_4 + \mathcal{B}_2$. Since $B_2 \leq \mathcal{B}_2$, we have $A_4 + B_2 \lesssim \mathcal{A}_4 + \mathcal{B}_2$ and consequently, $C\approx B_2 + A_4$. 

Now, we will show that $B_2 + A_4 \approx C_3 + C_5$.
Applying \eqref{int.equiv} with $\alpha = \frac{p(1-\beta)}{p-r}$ and 
\begin{equation*}
h(x) = \bigg( \int_{a}^{x} \bigg(\int_{t}^{x} {\mathcal W}^{\frac{\beta q}{r}} u \bigg)^{\frac{q}{p-q}} {\mathcal W}(t)^{\frac{\beta q}{r}} u(t) V_p(a,t)^{\frac{pq}{p-q}} dt \bigg)^{\frac{r(p-q)}{q(p-r)}}, \quad x\in (a, b),
\end{equation*}
it is clear that $A_4 \approx C_5$. We have also shown in \eqref{B2<C3} that $B_2 \lesssim C_3$. Hence, it remains to show that  $C_3 \lesssim A_4 + B_2$. To this end, we can use \eqref{sup-int-estimate}, and obtain $A_2 \lesssim A_4$. Moreover, we know from \eqref{C3<A2+B2} that $C_3 \lesssim A_2 + B_2$. Consequently, $C_3 \lesssim A_4+ B_2$ holds and the proof is complete. 
\qed

\

\textbf{Proof of Theorem~\ref{Cor}}
We will prove that inequality \eqref{monot.ineq-ab} holds for all $f \in \mathfrak{M}^{\uparrow}(a,b)$ if and only if inequality
\begin{equation}\label{special}
\bigg(\int_a^b \bigg(\int_a^x \bigg(\int_a^t h(\tau) d\tau \bigg)^\frac{1}{p}  u(t) dt \bigg)^{q} w(x) dx  \bigg)^{\frac{p}{q}} \leq C^p \int_a^b h(x) \bigg(\int_x^b v\bigg) dx
\end{equation}
holds for all $h \in \mathfrak{M}^+(a,b)$. 

Assume that \eqref{monot.ineq-ab} holds for all $f \in \mathfrak{M}^{\uparrow}(a,b)$. Substituting $f(x)= \big(\int_a^x h\big)^{\frac{1}{p}}$, $x\in (a,b)$ for $h \in \mathfrak{M}^+(a,b)$ in \eqref{monot.ineq-ab} and applying Fubini's theorem on the right-hand side, \eqref{special} follows. 

Conversely, assume that \eqref{special} holds for all $h \in \mathfrak{M}^+(a,b)$.  Since any $f \in \mathfrak{M}^{\uparrow}(a,b)$, even if $f(0)>0$, can be approximated pointwise from below by a function of the form $f(x)^p=\int_a^x h$, $x\in (a,b)$, then  the validity of \eqref{special}  yields \eqref{monot.ineq-ab}. Therefore, the result follows from Theorem~\ref{T:main}.

\qed

\textbf{Acknowledgments}

T.~\"{U}nver would like to express her gratitude to the Institute of Mathematics of the Czech Academy of Sciences for hosting her, providing her with a perfect working atmosphere, and supplying technical support.

We would like to thank the anonymous referees for their thorough and critical reading of the paper and their numerous useful comments and suggestions, which helped improve it substantially.

\section*{Declarations}

\begin{itemize}
\item Funding:

The research of  A.~Gogatishvili 
was partially supported by  the grant project 23-04720S of the Czech Science Foundation (GA\v{C}R),  The Institute of Mathematics, CAS is supported  by RVO:67985840, by  Shota Rustaveli National Science Foundation (SRNSF), grant no: FR21-12353, and by the
grant Ministry of Education and Science of the Republic of Kazakhstan (project no.
AP14869887).
The research of T.~\"{U}nver was supported by the grant of The Scientific and Technological Research Council of Turkey (TUBITAK), Grant No: 1059B192000075.

\item Conflict of interest:

The authors have no competing interests to declare relevant to this article's content.

\end{itemize}

\end{document}